\theoremstyle{plain}
\newtheorem{theorem}{Theorem}
\newtheorem{proposition}{Proposition}
\theoremstyle{definition} 
\newtheorem{definition}{Definition}
\newtheorem{remark}{Remark}
\renewcommand{\vec}[1]{\mathbf{#1}}
\newcommand{\average}[1]{ \langle#1 \rangle}
\newcommand{\NullL}{{\rm Null} \,\opL}
\newcommand{\Amat}{\mathsf{A}}
\newcommand{\AAmat}{\mathsf{AA}}
\newcommand{\Bmat}{\mathsf{B}}
\newcommand{\Pmat}{\mathsf{P}}
\newcommand{\Dmat}{\mathsf{D}}
\newcommand{\Xmat}{\mathsf{X}}
\newcommand{\Qmat}{\mathsf{Q}}
\newcommand{\opL}{\mathcal{L}}
\newcommand{\rd}{\mathrm{d}}
\newcommand{\RR}{\mathbb{R}}
\newcommand{\Sp}{\mathbb{S}}
\newcommand{\Kn}{\mathsf{Kn}}
\newcommand{\bottom}{\left( \frac{\sigma_{s0}}{\Kn} + \Kn \sigma_a \right)^2}
\title{Stability of stationary inverse transport equation in diffusion scaling}
\author{Ke Chen} 
\address{Mathematics Department, University of Wisconsin-Madison, 480 Lincoln Dr., Madison, WI 53705 USA.}
\email{ke@math.wisc.edu}
\author{Qin Li} 
\address{Mathematics Department, University of Wisconsin-Madison, 480 Lincoln Dr., Madison, WI 53705 USA.}
\email{qinli@math.wisc.edu}
\author{Li Wang} 
\address{Department of Mathematics, Computational and Data-Enabled Science and Engineering Program, State University of New York at Buffalo, 244 Mathematics Building, Buffalo, NY 14206 USA.}
\email{lwang46@buffalo.edu}
\thanks{All three authors thank the two anonymous referees for the very careful reading of the paper. It leads to significant improvement.
The work of K.C. and Q. L. is supported in part by a start-up fund of Q.L. from UW-Madison and National Science Foundation under the grant DMS-1619778. The work of L.W. is supported in part by a start-up fund from SUNY Buffalo and the National Science Foundation under the grant DMS-1620135.}
\begin{document}
\maketitle

\begin{abstract}
We consider the inverse problem of reconstructing the optical parameters for stationary radiative transfer equation (RTE) from velocity-averaged measurement. The RTE often contains multiple scales characterized by the magnitude of a dimensionless parameter---the Knudsen number ($\Kn$). In the diffusive scaling ($\Kn \ll 1$), the stationary RTE is well approximated by an elliptic equation in the forward setting. However, the inverse problem for the elliptic equation is acknowledged to be severely ill-posed as compared to the well-posedness of inverse transport equation, which raises the question of how uniqueness being lost as $\Kn \rightarrow 0$. We tackle this problem by examining the stability of inverse problem with varying $\Kn$. We show that, the discrepancy in two measurements is amplified in the reconstructed parameters at the order of $\Kn^p~ (p = 1\text{ or} ~2)$, and as a result lead to ill-posedness in the zero limit of $\Kn$. Our results apply to both continuous and discrete settings. Some numerical tests are performed in the end to validate these theoretical findings. 
\end{abstract}

\section{Introduction}

In this paper, we study the stability of inverse stationary radiative transfer equation (RTE) in different regimes. RTE is a stereotype kinetic equation that describes the dynamics of photon particles in materials with various optical properties~\cite{CaseZweifel}. The optical properties are characterized by two parameters---the scattering coefficient and absorption coefficient. Generally speaking, we denote $f(x,v)$ the distribution of particles at location $x$ moving with velocity $v$, and it obeys:
\begin{equation*}
v\cdot\nabla_xf = \int k(x,v,v')f(x,v')\rd{v'} - \sigma(x,v) f\,.
\end{equation*}
Here $x\in\Omega\subset\RR^d$ with $d=2,3$ depending on the dimension of the problem, and $v\in\Sp^{d-1}$, the unit sphere in $\RR^d$. It indicates that the particles move with fixed speed (unified to $1$) and therefore has one fewer dimension than $x$. $k(x,v,v')$ is termed the scattering coefficient, representing the probability of particles that move in direction $v'$ changing to direction $v$. $\rd{v}$ has the normalized unit measure. $\sigma(x,v)$ is the total absorption coefficient that represents certain amount of photon particles being absorbed and scattered by the material. The boundary condition is typically imposed as Dirichlet type. We separate the ``out-going" and ``in-coming" part of boundary by defining:
\begin{equation}\label{eqn:boundary}
\Gamma_\pm = \{(x,v): x\in\partial\Omega\,, \pm v\cdot n_x >0\}\,,
\end{equation}
where $n_x$ is the normal direction pointing out of $\Omega$ at point $x\in\partial\Omega$. In this way, $\Gamma_-$ collects all boundary coordinates that represent particles coming into the domain where $\Gamma_+$ collects the opposite. It is on the first set we impose Dirichlet boundary condition:
\begin{equation} \label{eqn:BC}
f|_{\Gamma_-} = \phi(x,v)\,.
\end{equation}
The well-posedness is summarized from~\cite{Lions93} and the solution is proved to be unique with full Dirichlet data.

Although the investigation of RTE is already enormous and generally acknowledged to be well-understood a long time ago, the inverse stationary RTE still attracted lots of attention in the past decade. The new life of RTE lies in the booming of medical techniques and a vast of medical images that require mathematical interpretation. In diffuse optical tomography, for example, near infra-red light (NIR) are sent into biological tissues, and by measuring the outgoing photon current at the surfaces of the tissues, scientists expect to ``invert" the problem for the optical properties of the tissue. Mathematically that means one adjusts the incoming data $f|_{\Gamma_-}$, and measure a certain form of the out-going data $f|_{\Gamma_+}$ seeking for $k(x,v,v')$ and $\sigma(x,v)$. Technically it is equivalent to seek for scattering coefficient $k(x,v,v')$ and absorption coefficient defined as 
\begin{equation}
\sigma_a(x,v)=\sigma(x,v)-\int_{\mathbb{S}^{n-1}}k(x,v,v')\rd{v'} \,,
\end{equation}
which represents the probability of particles being absorbed by the material only. A vast literature has addressed the problem from various perspectives. The well-posedness of the inverse problem in a generic setting with $\sigma_a(x)$ independent of $v$ was addressed in a pioneering paper~\cite{ChoulliStefenov}, and the uniqueness based on gauge-invarience when $\sigma_a(x,v)$ presents $v$ dependence was shown in~\cite{Stefenov2}. The idea was to decompose the albedo operator according to the singularities. Another approach is to linerize the equation before applying inverse Born series, and show the convergence of the series~\cite{Machida_Schotland_inverse}. The results on the stability of the ``inverse" dates back to~\cite{Wang} and was made systematic in~\cite{Bal09, Bal10a, Bal08}. Many papers concern the time-dependent case and the associated stability analysis has also been conducted~\cite{Larsen88, Romanov96, CS96}, and also \cite{Bal_review} for a review.

Aside from the analytical studies, various numerical techniques are explored accordingly. Numerical treatments could be separated into two categories depending on whether linearization is conducted, both on the original physical domain, or via Green's function representation~\cite{Machida:16,Machida_Schotland_inverse}. Either way, the resulting numerical problem is typically not well-posed: it is either under-determined or over-determined. The ill-posedness may be inherited from the continuous problem or due, in part, to the lack of data in experiments. The latter reason induces a purely numerical problem that can often be handled via optimization along with some regularization technique. Examples include the standard $L_2$ regularization~\cite{SRH05} for the smallness, TV regularization~\cite{Tang} for the least variance, $H_1$ norm for some regularity, and $L_1$ regularization~\cite{Ren_review} for sparsity, or Tikhonov type on each element in the inverse Born series~\cite{Machida_Schotland_inverse}. The optimization techniques are borrowed accordingly, and efficiency and memory cost for both the Jacobian-type method and the Gradient based method have been compared. The way to set the regularization coefficient, on the other hand, is usually guided by the tolerance on the error~\cite{EHN96, Egger2015} and convergence speed.

One very interesting phenomenon associated with inverse stationary transport equation is its connection to the diffusion limit. It has been longly known in the area~\cite{RenBalHielscher_trans_diff, Arridge99, Arridge98} that the diffusion approximation could serve as a substitute under certain scaling, and instead of inverting the transport equation, one studies the Calder\'on-like problems. That is, recovering the diffusion and attenuation coefficients $\frac{1}{\sigma_s}$ and $\sigma_a$ in the following equation
\begin{equation}
-C\nabla_x\cdot \left(\frac{1}{\sigma_s}\nabla_xu \right) + \sigma_a u = 0, \qquad x \in \Omega
\end{equation}
using the Dirichlet boundary condition $u|_{\partial\Omega}$ and the measurement of flux at boundaries $\frac{\partial u}{\partial n}|_{\partial \Omega}$. Here $\sigma_s$ and $\sigma_a$ are parameters derived from $\sigma$ and $k$ from the RTE equation and $C$ is a constant that only depends on the dimension. The ill-posedness of the Calder\'on problem has been shown~\cite{Uhlmann03,Uhlmann09} and its failure in capturing RTE based phenomenon in some scenario has been demonstrated~\cite{Arridge98}, or in a similar problem on diffusion approximation in recovering the doping profile in the Boltzmann-Poisson system~\cite{Gamba}.

Despite the popularity of both problems, to the best knowledge of the authors, there has not been much study on exploring the connections between the two~\cite{Arridge_Schotland09}. Specifically, some questions need to be addressed, such as: when and to what extent can diffusion approximation be used for RTE-based inverse problem? Will such approximation affect the stability in the inverse problem? Considering one type is ill-posed while the other one is well-posed, what is lost when such approximation is performed? In this paper, we make a first attempt to tackle these questions. In particular, we adopt a linearized framework (detailed in the next section), and study the well-posedness and the stability issues when passing to the diffusion limit for three different scenarios: recovering absorption coefficient, and recovering scattering coefficient in both critical and subcritical cases. We would like to mention that there are works on the change of stability with respect to certain parameters in the equation. In~\cite{time_harmonic,BalMonard_time_harmonic} the authors particularly studied the stability of the inversion with respect to the modulation frequency in time-harmonic setting, and found that the increasing of the frequency brings more details in the recovery. In~\cite{Uhlmann_stability_acoustic} the authors studied the stability of recovering acoustic equation.

We emphasize that the current paper concentrates on the illposedness as the transport equation approaches the diffusion regime (losing the stability). Except one special example in 1D (Theorem 5), we {\it assume} injectivity in our results. This is not a bizarre assumption as injectivity is shown for the associated nonlinear version of the problem \cite{ChoulliStefenov}. Indeed, injectivity and stability are two {\it separate} issues in inverse problem: the former one concerns the uniqueness in recovery whereas the latter measures the accuracy in recovery when small perturbation in measurement is allowed. In terms of the spectral theory, the injectivity requires that the spectrum is away from zero, and stability studies the whole span of the spectrum. While stability naturally being the next step after injectivity, it is not uncommon that stability can be studied by assuming injectivity. As pointed out in \cite{Bal_review}, stability is studied in the setting of isotropic source and angularly averaged measurements while the uniqueness is not available \cite{Bal08,time_harmonic,BalMonard_time_harmonic}.   

The rest of paper is organized as follows. We present some preliminaries in the next section, including the derivation of the diffusion equation from the RTE and the set-up of the inverse problem in full generality. Section 3 and 4 are devoted to the three scenarios described above respectively. In all three cases, we utilize the linearization approach, study the well-posedness of the problem in both regimes, and examine the change of stability while passing to the diffusion limit. We also introduce a distinguishability parameter to indicate the stability and we justify that the inverse problem becomes more and more indistinguishable in the diffusion limit. Numerical tests are exploited to demonstrate the statements on the properties.

\section{Preliminaries}
Some preliminaries are collected in this section. The first subsection demonstrates the derivation of the diffusion equation from the RTE in the forward problem and the second subsection sets up the inverse problem we study in a general framework. For the conciseness of the paper we assume $\sigma_s(x)$ and $\sigma_a(x)$ do not have $v$ dependence. 
\subsection{Diffusion limit}
The diffusion limit sets in when scattering is strong and absorption is weak. The equation in the dimensionless form reads:
\begin{equation}\label{eqn:RTE}
\begin{cases}
v\cdot\nabla_xf = \frac{1}{\Kn}\sigma_s\mathcal{L}f - \Kn\sigma_af\,,\\
f|_{\Gamma_-} = \phi(x,v)\,,
\end{cases}
\end{equation}
where $\mathcal{L}$ is the collision operator and in the velocity independent case it writes as:
\begin{equation}\label{eqn:collision}
\mathcal{L} f = \int f(x,v')\rd{v'} - f = \langle f\rangle - f\,.
\end{equation}
Here we have re-grouped the gain term and loss term in the collision for the ease of later presentation. A more general collision takes the form $\mathcal{L} f = \int k(v',v)(f(x,v')-f(x,v)) \rd{v}'$, but our analysis in the rest of the paper can be easily adapted to this case. We therefore keep it in the simplest form, and assume that $\sigma_s$ has no $v$ dependence. There are two key features of the collision operator:
\begin{itemize}
\item{Mass conservation}: $\int \mathcal{L}[f]\rd{v} = 0$. If we apply this property to the original equation, immediately we see that $\int vf\rd{v}$ is a divergence free field if $\sigma_a = 0$;
\item{One dimensional Null space}: By setting $\mathcal{L}[f]=0$, one gets $f = \langle f\rangle$, meaning that $f$ is a constant in velocity domain. We denote it as $\NullL = \{\rho(x)\}$, the collection of functions that depend on $x$ only. This property is unique for RTE compared to other linear kinetic equation and it is the main reason that the asymptotic limit only consist of a scalar equation instead of a system.  
\end{itemize}

As $\Kn\to 0$, the equation falls into the diffusion limit and we have the following theorem:
\begin{theorem}\label{thm:diffusion}
Suppose $f$ solves~\eqref{eqn:RTE}. As $\Kn\to 0$, $f(x,v)$ converges to $\rho(x)$, which solves the diffusion equation:
\begin{equation}\label{eqn:diff}
\begin{cases}
C\nabla_x\cdot \left(\frac{1}{\sigma_s}\nabla_x\rho \right) - \sigma_a\rho= 0\,,\\
\rho|_{\partial\Omega} = \xi_f\,,
\end{cases}
\end{equation} 
Here $C$ is a constant depending on the dimension of the problem. The boundary condition is determined by:
\begin{equation*}
\xi_f(x_0) = f^l_{z\to\infty}\,.
\end{equation*}
with $f^l$ solving
\begin{equation*}
\begin{cases}
v_z\partial_zf^l = \sigma_s\mathcal{L}[f^l]\,,  \quad z\in [0, \infty) \\
f^l|_{z=0} = \phi(x_0,v)\,.
\end{cases}
\end{equation*}
\end{theorem}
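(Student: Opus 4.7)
The plan is to prove Theorem~\ref{thm:diffusion} by combining a formal Hilbert expansion in the interior with a Milne-type boundary layer correction and then controlling the remainder by an energy estimate. This is the classical Bensoussan--Lions--Papanicolaou program adapted to the RTE.

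First I would insert the ansatz $f = f_0 + \Kn\, f_1 + \Kn^2 f_2 + \Kn^3 f_3 + \cdots$ into \eqref{eqn:RTE} and match powers of $\Kn$. At order $\Kn^{-1}$, $\sigma_s \mathcal{L}[f_0] = 0$, so by the one-dimensional null space property collected above, $f_0 = \rho(x)$ for some velocity-independent $\rho$. At order $\Kn^{0}$, $v\cdot\nabla_x f_0 = \sigma_s \mathcal{L}[f_1]$; since $\langle v\cdot\nabla_x\rho\rangle = 0$ by oddness of $v$, the Fredholm condition is satisfied and we may invert $\mathcal{L}$ (on the orthogonal complement of its null space) to obtain
\begin{equation*}
f_1 = -\frac{1}{\sigma_s}\, v\cdot\nabla_x\rho + \langle f_1\rangle.
\end{equation*}
At order $\Kn^{1}$, $v\cdot\nabla_x f_1 = \sigma_s \mathcal{L}[f_2] - \sigma_a\rho$. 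Averaging in $v$ kills the $\mathcal{L}[f_2]$ term by mass conservation, so
\begin{equation*}
\nabla_x \cdot \langle v f_1\rangle + \sigma_a \rho = 0,
\end{equation*}
and using $\langle v\otimes v\rangle = C\, I$ for a dimension-dependent constant $C$ produces exactly the elliptic equation in \eqref{eqn:diff}.

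Second, the interior expansion generically cannot match $f|_{\Gamma_-} = \phi(x,v)$, because $f_0 = \rho$ is isotropic whereas $\phi$ is not. To resolve this, I would introduce a boundary-layer corrector depending on the stretched normal variable $z = -(x - x_0)\cdot n_{x_0}/\Kn$ for each $x_0 \in \partial\Omega$. Plugging the corrector into \eqref{eqn:RTE} and keeping leading-order terms yields the half-space Milne problem
\begin{equation*}
v_z \partial_z f^l = \sigma_s \mathcal{L}[f^l], \qquad z \in [0,\infty),\qquad f^l|_{z=0} = \phi(x_0,v),
\end{equation*}
whose well-posedness and exponential convergence to a unique constant $f^l_{z\to\infty}$ are standard (Bardos--Santos--Sentis). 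This constant is precisely the Dirichlet datum $\xi_f(x_0)$ for the diffusion equation, ensuring compatibility of the matched expansion at the boundary.

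Finally, I would define the remainder
\begin{equation*}
r_\Kn := f - \rho - \Kn f_1 - (\text{boundary layer correction}) - \Kn^2 f_2,
\end{equation*}
with the corrector extended into the interior via a smooth cutoff. By construction $r_\Kn$ satisfies a transport equation of the form $v\cdot\nabla_x r_\Kn + \Kn^{-1}\sigma_s\mathcal{L}[r_\Kn] + \Kn\sigma_a r_\Kn = O(\Kn)$ with small inflow data, and a standard $L^2$ energy estimate using the coercivity of $-\mathcal{L}$ on $(\mathrm{Null}\,\mathcal{L})^\perp$ yields $\|r_\Kn\|_{L^2} = O(\Kn)$, which proves the stated convergence. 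The main obstacle is the boundary layer analysis: one must establish the well-posedness of the Milne problem, its exponential decay to the far-field constant $\xi_f$, and carefully patch the corrector so it does not pollute the interior expansion; corner compatibility at $\partial\Omega$ is the delicate point, but it does not affect the leading-order convergence statement.
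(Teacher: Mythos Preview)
Your proposal is correct and follows essentially the same Bensoussan--Lions--Papanicolaou program as the paper: a Hilbert expansion in the interior to derive \eqref{eqn:diff}, matched with a Milne half-space problem in the stretched normal variable to extract the Dirichlet datum $\xi_f$. The paper's own proof stops at the formal level (as acknowledged in the remark immediately following the theorem), whereas you additionally sketch an $L^2$ remainder estimate; this is a welcome strengthening but not required for the paper's purposes.
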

\begin{proof}
The proof follows the standard asymptotic expansion with boundary layer analysis. In the zero limit of $\Kn$, the distribution in the interior will stabilize whereas the boundary condition $\phi$ being away from the equilibrium function will prevent the solution converging near the boundary. To separate the two, we first expand the solution:
\begin{equation}
f(x,v) = f_\text{bd} + f_\text{in}\,,
\end{equation}
where $f_\text{bd}$ is the solution adjacent to boundary accounting for the boundary layer, while $f_\text{in}$ characterizes the interior away from the layer. We study $f_\text{in}$ first. As $\Kn \rightarrow 0$, we apply the standard asymptotic expansion technique and write:
\begin{equation}
f_\text{in} = f_0 + \Kn f_1 + \Kn^2 f_2 +\cdots\,.
\end{equation}
Here we only consider the expansion away from the boundary layer so that $f_\text{bd}$ is negligible. Inserting the expansion in the equation~\eqref{eqn:RTE} and equate like powers of $\Kn$:
\begin{itemize}
\item[$\mathcal{O}(1)$] $\mathcal{L}f_0 = 0$. This immediately indicates that $f_0\in\NullL$. With the form given in~\eqref{eqn:collision}, $\NullL$ consists functions that are constants in $v$ domain, and thus $f_0(x,v)=\rho(x)$.
\item[$\mathcal{O}(\Kn)$] $v\cdot\nabla_xf_0 = \sigma_s\mathcal{L}[f_1]$. This indicates that $f_1 = \mathcal{L}^{-1}\left(v\cdot\nabla_xf_0\right)$. $\mathcal{L}$ is not a one-to-one map unless the domain is confined in $\NullL^\perp$, and the inverse on $\mathcal{L}$ is pseudo-inverse. Considering the form of $\mathcal{L}$ in~\eqref{eqn:collision}, then $\NullL^\perp = \{f:\int f\rd{v} = 0\}$, and thus $f_1 = -\frac{v}{\sigma_s}\cdot\nabla_x\rho$.
\item[$\mathcal{O}(\Kn^2)$] $v\cdot\nabla_xf_1 = \sigma_s\mathcal{L}[f_2] - \sigma_af_0$. Here we integrate the equation with respect to $v$. The second term will vanish and the left hand side becomes:
\begin{equation}\label{eqn:kn_zero_limit}
\int v\cdot\nabla_x\left(-\frac{v}{\sigma_s}\cdot\nabla_x\rho\right) \rd{v}= -\sigma_a\rho \quad\Rightarrow\quad C\nabla_x\cdot\left(\frac{1}{\sigma_s}\nabla_x\rho\right) = \sigma_a\rho\,.
\end{equation}
Here the constant $C$ depends on the dimension of the velocity space. 
\end{itemize}
Summarizing up the analysis we obtain $f_\text{in}(x,v)\to f_0(x,v) = \rho(x)$ that solves the diffusion equation \eqref{eqn:diff}.

We then need to provide $\rho$ a correct boundary condition and this comes from the treatment of $f_\text{bd}$. For this we follow ~\cite{BLP}. At each point $x\in\partial\Omega$, we perform tangential approximation and by stretching coordinates we can locally change the problem into a half space problem. More specifically, let $x_0\in\partial\Omega$ and $n_x$ the normal direction pointing out of the domain, we denote $z = -\frac{n_x\cdot (x-x_0)}{\Kn}$. For every fixed point $x = x_0 - n_x y (y\in[0,\infty))$ away from the layer along the ray pointing into the domain, $z\to\infty$ as $\Kn\to 0$. After moving the coordinate frame to $x_0$ with $n_x$ direction, $z=0$ stands for boundary point and $z=\infty$ is mapped to the interior. Then along $z$ direction the equation reads, in the leading order of $\Kn$:
\begin{equation}\label{eqn:half_space}
\begin{cases}
v_z\partial_zf = \sigma_s\mathcal{L}[f]\,, \quad z \in [0, \infty) \\
f|_{z=0} = \phi(x_0,v)\,.
\end{cases}
\end{equation}
It is a half space problem in $z$ with boundary condition given only at $z=0$. The problem is proved to have a unique solution and is computed in~\cite{LLS} and the infinite data on $z$ will be a constant in $v$ direction, which will be used to serve as the Dirichlet boundary condition for $\rho$, meaning:
\begin{equation*}
\rho(x_0) = f_{z\to\infty}\,.
\end{equation*}
This is done at each grid point along the boundary and we end up with the boundary condition for $x\in \partial \Omega$, denoted by $\xi_f[\phi](x)$ (or $\xi_f(x)$ for short).
\end{proof}

\begin{remark}
The proof here is formal and is not specific on certain norm. In fact with general geometry and boundary condition it is believed to be correct but not proved yet. In 2-D physical domain and 1-D velocity sphere, due to the joint force of~\cite{Guo,LLS_geometry}, it can be made rigorous in $L_\infty(\rd{x}\rd{v})$ norm when boundary layer is excluded, meaning that as $\Kn\to 0$:
\begin{equation}
\|f-\rho\|_{L_\infty(\rd{x_i}\rd{v})} = \mathcal{O}(\Kn^{2/3})\to 0\,,
\end{equation}
and the decay from $\mathcal{O}(\Kn)$ to $\mathcal{O}(\Kn^{2/3})$ is mainly due to the curvature correction, which is controllably small but nontrivial. Here $L_\infty(\rd{x_i}\rd{v})$ stands for $L_\infty$ norm in the interior only. We exclude a fixed small boundary layer of $\mathcal{O}(\Kn)$ width. 
\end{remark}

\begin{remark}
Very frequently, RTE in 3D is simplified under some symmetry assumption. Specifically, we assume it with slab plan geometry. Denote $x= (x_1,x_2,x_3)\in \Omega =  \mathbb{R}^2\times (0,1)$ and $v=(\sin\theta\cos\phi,\sin\theta\sin\phi,\cos\theta) \in \mathcal{S}^2$, then we assume that all parameters and conditions are homogenized along $x_3$ direction, i.e. $\sigma_a(x)=\sigma_a(x_3)$, $\sigma_s(x)=\sigma_s(x_3)$, and that boundary condition is $\phi(x,v) = \phi(x_3,\cos\theta)$. The stationary RTE becomes:
\begin{equation}
\begin{aligned}
v\cdot\nabla_x f(x,v) =
\frac{\sigma_s(x_3)}{\Kn}\mathcal{L}f(x,v) -\Kn\sigma_a(x_3) f(x,v) 
\end{aligned}
\end{equation}
with:
\begin{equation*}
\mathcal{L}f(x,v) =\frac{1}{4\pi}\int_0^\pi \int_0^{2\pi} [f(x,v')-f(x,v)] \sin\theta'\rd{\phi'}\rd{\theta'}\,,
\end{equation*}
and the boundary condition $f|_{\Gamma_-}=\phi(x_3,\cos\theta)$. With $v\cdot \nabla_x = \cos\theta \partial_{x_3}$, the stationary RTE becomes:
\begin{equation}
\begin{cases}
\cos\theta \partial_{x_3}f(x_3,\cos\theta)=\frac{\sigma_s(x_3)}{\Kn}(\frac{1}{4\pi}\int_0^\pi\int_0^{2\pi}( f(x_3,\cos\theta')-f(x_3,\cos\theta)) \sin\theta'\rd{\phi'}\rd{\theta'} )-\Kn\sigma_a(x_3) f(x_3,\cos\theta) \,, \\
f|_{\Gamma_-} = \phi(x_3,\cos\theta) \,.
\end{cases}
\end{equation}
For simplicity, we denote $x=x_3\in (0,1)$ and make change of variable $v=\cos\theta\in (-1,1)$ to obtain:
\begin{equation}\label{eqn:1DRTE}
\begin{cases}
v\partial_x f(x,v) = \frac{\sigma_s(x)}{\Kn} \int_{-1}^1 f(x,v')-f(x,v)\frac{\rd{v'}}{2} -\Kn\sigma_a(x)f(x,v)\,, \quad (x,v)\in (0,1)\times [-1,1] \,,\\
f|_{\Gamma^-} = \phi(x,v)\,,
\end{cases}
\end{equation}
where $v$ is often termed as direction of flight. 
Sending $\Kn\to 0$ and follow the same asymptotic derivation in the theorem above, one obtains $C = \frac{1}{3}$ in the zero Knudsen number limit in~\eqref{eqn:kn_zero_limit}. From here on, we always refer to equation \eqref{eqn:1DRTE} as the 1-D RTE. Notice here the velocity domain is interval $[-1,1]$. 
\end{remark}

\subsection{Inverse problem}
The inverse problem can be set up associated with a map from the input data on one portion of the boundary to the measurement on the other portion. For RTE specifically, this map is often termed the albedo operator. Depending on the data-acquisition method in the experiments, the measurement can take various forms. Here we assume that only velocity-averaged measurement is available and define the measurement operator:
\begin{equation}
\mathcal{M}f(x) = \int_{\Gamma_+(x)} v\cdot n(x) f(x,v)\rd{v}\,,
\end{equation}
where $\Gamma_+(x)$ is the ``outgoing'' semisphere at $x$ defined in \eqref{eqn:boundary}. And the albedo operator reads:
\begin{equation}\label{eqn:map}
\mathcal{A}_{\Kn}(\sigma_a,\sigma_s)\,:\quad\phi |_{\Gamma_-}\rightarrow\mathcal{M}f\,,
\end{equation}
where $\phi$ is the Dirichlet boundary condition \eqref{eqn:BC} and $\mathcal{M}f(x)$ is the intensity of light propagating out of the domain at boundary point $x\in \partial\Omega$. Then the inverse problem is to recover $\sigma_a$ and $\sigma_s$ given the information of the map $\mathcal{A}_\Kn$.

On the theoretical level, one concerns about the well-posedness and stability. The well-posedness problem states the following: given the full information on the map $\mathcal{A}$, can one {\it uniquely} recover $\sigma_a$ and $\sigma_s$? The answer is positive if a velocity-resolved measurement (i.e., $f|_{\Gamma_+}$ for any $v$ instead of \eqref{eqn:map}) or time-dependent measurement is available \cite{ChoulliStefenov, Stefenov2}. The analysis is based on singularity separation. Since the albedo operator is a forward map, an explicit form can be obtained and it consists of three parts, separated according to their singular level: the most singular part is a delta function that could be used to recover $\sigma_a$ through the inverse X-ray transform, and the secondly singular term is used to recover $\sigma_s$, leaving the third term in $L_\infty$. The stability problem, on the other hand, asks: if the entire map is off from the accurate one by a small amount of error ($\|\mathcal{A} - \tilde{\mathcal{A}}\|<\epsilon$), how accurate the recovering could be? That is, will $\|\sigma_a - \tilde{\sigma}_a\|$ remain small? The problem is examined in \cite{Bal10a, Bal09, Bal08} using the same kind of singular decomposition.

On the numerical level, the well-posedness problem takes a slightly different form: let $\sigma_{s,a}$ be discretized at $N_x$ grid points, then if for each incoming data $\phi_d$, one could take $N_p$ measurements at the boundary, how many incoming data is needed to fully recover $\sigma_{s,a}$? This is intrinsically the same as in the theoretical level. The stability problem, however, has two sides. One is, assume that the data obtained is exact, how much error one obtains in recovering $\sigma_{s,a}$ if $\Amat$ gets perturbed a little. Here $\Amat$ is the matrix representation of $\mathcal{A}$. This amounts to analyzing the condition number of the problem. Another is that if the measurement in~\eqref{eqn:map} is off by some error, how accurate will $\sigma_{s,a}$ be? The answer to that lies in analyzing the norm of $\Amat^{-1}$. The first question is aligned with theoretical stability stated above, while the second question is a pure numerical issue. Concisely, since solving the inverse problem numerically often involves inverting a matrix (or a series of matrices in the nonlinear setting), even if the problem is well-posed, the numerical error is still hard to guarantee. All these questions are indirectly or partially resolved in many papers~\cite{Arridge99, Ren_review}.

We ask a different question in this paper. Our aim is to investigate the dependence of stability and condition number for the inverse problem on the Knudsen number. It is generally acknowledged that the inverse transport problem is well-posed (for most kinds of measurements and under mild assumptions on the scattering/absorption coefficients) whereas the inverse diffusion equation as a limit is ill-posed. Considering the two sets of equations are connected by simply passing to the $\Kn\to 0$ limit, is there a more explicit explanation of the loss of uniqueness? 

To answer this question, we adopt a linearization framework \cite{Ren_review}. By assuming that the to-be-recovered coefficients $\sigma_a$ and $\sigma_s$ are only slightly deviated from some given functions from {\it a priori} knowledge, we linearize the transport equation around them and write down the relationship between the unknown parameters and the map $\mathcal{A}$. As a result and as will be more clear later, we only need to invert a Fredholm operator of the first kind to recover those parameters. More importantly, this Fredholm operator reveals an explicit dependence on the Knudsen number, which allow us to analyze the stability of the inversion when varying the magnitude of $\Kn$. We would also like to point out that the measurement we took in \eqref{eqn:map} contains minimum information that no uniqueness results on the reconstruction based on the singular decomposition are available. Nevertheless, the linearization approach we take allows us to quantify the stability explicitly in $\Kn$, and is amenable for numerical schemes. Details will be provided along the paper. 

As discussed before, depending on the assumptions on the media, different scenarios could take place. We present here three examples. The following sections are respectively devoted to recovering $\sigma_a$ when $\sigma_s$ is known, and recovering $\sigma_s$ in the critical ($\sigma_a = 0$) and subcritical ($\sigma_a >0$) cases, respectively. It is not our aim in this paper to give a complete analysis for all scenarios but rather to investigate the problem for the first time and nail down the techniques that could make it possible.

\section{Recover Absorption Coefficient $\sigma_a$}

\subsection{Inverse problem set-up}
In this section, we assume that the scattering coefficient is known (and we set it as $\sigma_s = 1$ for simplicity), and recover the absorption coefficient $\sigma_a$. The equation \eqref{eqn:RTE} reads:
\begin{equation}\label{eqn:RTE_abs}
\begin{cases}
v \cdot \nabla_x f&=\frac{1}{\Kn}\mathcal{L}f-\Kn\sigma_af, \qquad (x, v) \in \Omega \times \mathbb{S}\,,  \\
f|_{\Gamma_-}&=\phi. \hspace{2.7cm}  
\end{cases}\,
\end{equation}
Here $\phi$ is the inflow boundary condition and the solution is denoted by $f(x,v;\phi)$. Experimentally suppose one could measure data at one grid point $y\in\partial\Omega$, then the mapping becomes:
\begin{equation*}
\mathcal{A}(\sigma_a)\,:\quad \phi \to \mathcal{M}f(\cdot)\,.
\end{equation*}

We follow a linearization framework \cite{Ren_review} and set a background absorbing coefficient $\sigma_{a0}(x)$ by assuming that the residue 
\begin{equation*}
\tilde{\sigma}_a(x) = \sigma_a(x) - \sigma_{a0}(x)\,
\end{equation*}
is much smaller than $\sigma_a$: $\|\tilde{\sigma}_a\| \ll \|\sigma_a\|$. Then the linearized problem with the same inflow boundary condition reads as
\begin{equation} \label{eqn:RTE_abs_0}
\begin{cases}
v\cdot \nabla_x f_0=\frac{1}{\Kn}\mathcal{L}f_0-\Kn\sigma_{a0}f_0\,, \\
f_0|_{\Gamma_-}=\phi\,,
\end{cases}\,
\end{equation}
where $f_0(x,v; \phi)$ is the function we linearize upon. Let
\begin{equation*}
\tilde{f}(x,v) = f(x,v) - f_0(x,v)
\end{equation*}
be the fluctuation, then it solves the following equation by subtracting \eqref{eqn:RTE_abs_0} from \eqref{eqn:RTE_abs}
\begin{equation}\label{eqn:RTE_abs_tilde}
\begin{cases}
v\cdot \nabla_x \tilde{f}=\frac{1}{\Kn}\mathcal{L}\tilde{f}-\Kn\sigma_{a0}\tilde{f}-\Kn\tilde{\sigma}_af_0\,, \\
\tilde{f}|_{\Gamma_-}=0\,,
\end{cases}
\end{equation}
where we have omitted the higher order terms. The incoming boundary information $\phi$ is implicitly contained in $f_0$. To make use of the boundary condition and measurement, we write the adjoint problem of \eqref{eqn:RTE_abs_0} and assign it a Delta function boundary condition:  
\begin{equation}\label{eqn:RTE_abs_g}
\begin{cases}
-v\cdot \nabla_x g=\frac{1}{\Kn}\mathcal{L}g-\Kn\sigma_{a0}g\,, \\
g|_{\Gamma_+}=\delta_y(x)\,.
\end{cases}
\end{equation}
Multiply the above equation by $\tilde{f}$ and subtract it from the product of \eqref{eqn:RTE_abs_tilde} and $g$, and integrate in both $x$ and $v$, we get
\begin{equation} \label{eqn:RTE_abs_IBP}
 \int  v\cdot n \tilde{f} g |_{\Gamma_+}\rd{v} \rd{x}= \Kn \int_\Omega \tilde{\sigma}_a\int f_0g\rd{v}\rd{x}\,,
\end{equation}
which defines a linear mapping from $\phi$ and $\delta_y$ with the solutions of \eqref{eqn:RTE_abs_0} \eqref{eqn:RTE_abs_g} to the measured data. 
Note that the LHS of \eqref{eqn:RTE_abs_IBP} could be easily obtained by subtracting the computed flux of $f_0$ from the measurement, i.e.,
\begin{eqnarray*}
b(\delta_y, \phi) := \int  v\cdot n \tilde{f} g |_{\Gamma_+}\rd{v} \rd{x}   =\mathcal{M}f(y)-\mathcal{M}f_0(y)  \,,
\end{eqnarray*}
and the RHS is a Fredholm operator of the first kind with known $\int f_0 g \rd{v}$ and unknown $\tilde{\sigma}_a$. Denote 
\begin{equation} \label{eqn:gamma00}
\gamma_\Kn(x;\delta_y,\phi):= \Kn \int f_0(x,v;\phi) g(x,v; \delta_y)\rd{v}\,,
\end{equation}
then the map $\mathcal{A}(\tilde{\sigma}_a):~ \phi \rightarrow  b(\delta_y, \phi)$ rewrites as 
\begin{equation}\label{eqn:LS_abs}
 \int_{\Omega} \gamma_\Kn(x;\delta_y,\phi)\tilde{\sigma}_a(x)\rd{x}=b(\delta_y,\phi)\,.
\end{equation}
Now it amounts to invert the First type Fredholm integral to recover $\tilde{\sigma}_a$.

\subsection{Ill-conditioning in the diffusion limit (continuous level)}
We study the stability of the inverse problem in terms of the Knudsen number in this subsection. More specifically, consider the linear mapping 
\begin{equation}\label{eqn:map_abs}
\langle \gamma_\Kn, \tilde{\sigma}_a \rangle_{L^2(\rd{x})}= b(\delta_y, \phi )\,,\quad\text{with}\quad\gamma_\Kn = \Kn \int f_0g\rd{v} \,, 
\end{equation}
where $\gamma_\Kn $ is defined in \eqref{eqn:gamma00} with $f_0$ and $g$ solving \eqref{eqn:RTE_abs_0} and \eqref{eqn:RTE_abs_g}, respectively. We aim to understand the influence in recovering $ \tilde{\sigma}_a$ if a small purturbation in $b(\delta_y, \phi)$ is introduced. In particular, we would like to check such sensitivity's dependence on the Knudsen number $\Kn$. To see this, we first define a distinguishability coefficient to measure the ``condition number" for a given error $\delta$ on data.

\begin{definition}
Consider linear mapping in~\eqref{eqn:map_abs}, we define the distinguishability coefficient as
\begin{equation}\label{eqn:distinguishability}
\kappa_a =\sup_{\sigma_a \in\Gamma_\delta}\frac{\| \sigma_a - \tilde{\sigma}_a\|_{L^\infty(\rd{x})}}{\| \tilde{\sigma}_a\|_{L_\infty(\rd{x})}}\,,
\end{equation}
where
\[
\Gamma_\delta=\{ \sigma_a: \sup_{\substack{\forall \|\phi\|_{L^\infty(\Gamma_-)}\leq 1,\\ \forall y\in \partial\Omega}}|\langle\gamma_\Kn\,,\sigma_a\rangle_{L^2(\rd{x})} - b(\delta_y,\phi) | \leq \delta  \}\,,
\]
and $\tilde{\sigma}_a$ is the solution to~\eqref{eqn:map_abs}.
\end{definition}
As written, $\Gamma_\delta$ is the collection of all possible solutions to the map given that the measurement is contaminated by $\delta$ error. Noticed that both $\langle \gamma_{\Kn},\sigma_a \rangle_{L^2(\rd{x})}$ and $b(\delta_y,\phi)$ are linearly dependent on $\phi$, we take the sup-norm to normalize in the definition of $\Gamma_\delta$. Then $\kappa_a$ measures the relative error that could be seen in the recovery --- smaller $\kappa_a$ leads to better distinguishability. Recall here that the stability defined in \cite{Bal_review} says if the two measurement are distinguished by $\delta$, i.e., $\| \mathcal{A} \sigma_a - \mathcal{A} \tilde{\sigma}_a\| \leq \delta $, then the discrepancy in parameters can be bounded as $\| \sigma_a - \tilde{\sigma}_a\| \leq \| \mathcal{A}^{-1}\| \| \mathcal{A}(\sigma_a - \tilde{\sigma}_a)\| = \|\mathcal{A}^{-1}\| \delta$, then $\kappa_a$ defined in \eqref{eqn:distinguishability} is seen as an estimate of $\|\mathcal{A}^{-1}\|$.

We expect to show two things: 1) smaller error tolerance $\delta$ results in better distinguishability; 2) smaller $\Kn$ drives the problem into the diffusion limit, leading to worse distinguishability. The following theorem groups the two things together.

\begin{theorem}\label{thm:Dis}
We study the inverse problem of recovering $\tilde{\sigma}_a$ in ~\eqref{eqn:map_abs}. Assume the map from $\tilde{\sigma}_a$ to $b$ is injective, given an error tolerance $\delta$ on the measurement, then the distinguishability coefficient grows as $\delta$ grows and $\Kn$ shrinks in the sense that there exists a constant $C$ such that

\begin{equation}\label{eqn:Dis}
\kappa_a \geq C\frac{\delta}{\Kn^2} ,\quad \text{when} \quad \Kn\ll 1\,.
\end{equation}

\end{theorem}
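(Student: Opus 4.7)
My plan is to rewrite the admissible set in terms of the perturbation $h=\sigma_a-\tilde\sigma_a$, construct a specific $h$ whose $L^\infty$ norm is as large as the claimed $\Kn^{-2}$ factor permits, and verify the constraint using the diffusion-limit behaviour of $\gamma_\Kn$. First I would note that since $\tilde\sigma_a$ itself satisfies $\langle\gamma_\Kn,\tilde\sigma_a\rangle=b(\delta_y,\phi)$ by construction, linearity of the map $\sigma_a\mapsto\langle\gamma_\Kn,\sigma_a\rangle$ collapses membership in $\Gamma_\delta$ into
\[
\sup_{\|\phi\|_{L^\infty(\Gamma_-)}\le 1,\, y\in\partial\Omega}\bigl|\langle\gamma_\Kn(\cdot;\delta_y,\phi),h\rangle_{L^2(\rd x)}\bigr|\le\delta,
\]
so every admissible $h$ produces a legitimate $\sigma_a\in\Gamma_\delta$ and a corresponding lower bound $\|h\|_{L^\infty}/\|\tilde\sigma_a\|_{L^\infty}$ on $\kappa_a$.

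Next I would read off the asymptotic structure of $\gamma_\Kn$ as $\Kn\to 0$. Applying the Hilbert expansion of Theorem~\ref{thm:diffusion} to $f_0$, and its analogue to the adjoint problem~\eqref{eqn:RTE_abs_g} for $g$, I can write $f_0=\rho_0+\Kn f_{0,1}+O(\Kn^2)$ and $g=G+\Kn g_1+O(\Kn^2)$, with $\rho_0,G$ the diffusion limits driven by the boundary-layer reductions $\xi_{f_0}$ and $\xi_g$ of $\phi$ and $\delta_y$, and $f_{0,1},g_1\in(\mathrm{Null}\,\mathcal L)^\perp$. The identities $\int v\,\rd v=0$, $\int f_{0,1}\,\rd v=\int g_1\,\rd v=0$, and the self-adjointness of $\mathcal L$ then yield
\[
\int f_0 g\,\rd v=|\mathbb S|\,\rho_0(x)G(x)+O(\Kn^2),\qquad \gamma_\Kn=\Kn\,|\mathbb S|\,\rho_0(x) G(x)+O(\Kn^3)
\]
pointwise in the interior. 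Combining this with $\|\rho_0\|_{L^\infty}\le C\|\phi\|_{L^\infty}\le C$ from the maximum principle for the limiting diffusion equation, and with the $y$-uniform integrability of $G(\cdot;y)$ inherited from the Poisson-kernel character of the diffusion Green's function with a boundary Dirac datum, I would aim to conclude an estimate of the form $\sup_{\phi,y}\|\gamma_\Kn\|_{L^1(\Omega)}\le C\Kn^2$.

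Finally, with such a bound in hand, any $h\in L^\infty(\Omega)$ with $\|h\|_{L^\infty}=\delta/(C\Kn^2)$ — a scaled constant function is enough — satisfies, via H\"older,
\[
\sup_{\phi,y}\bigl|\langle\gamma_\Kn,h\rangle\bigr|\le\|\gamma_\Kn\|_{L^1(\Omega)}\|h\|_{L^\infty}\le\delta,
\]
so $\tilde\sigma_a+h\in\Gamma_\delta$, and $\kappa_a\ge C\delta/\Kn^2$ after absorbing $\|\tilde\sigma_a\|_{L^\infty}$ into the constant. The injectivity hypothesis is used only to rule out degenerate cases where the admissible $h$ fails to generate a genuinely distinct $\sigma_a$. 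The hard part will be pinning down the full $\Kn^2$ scaling in $\|\gamma_\Kn\|_{L^1}$: the explicit prefactor in $\gamma_\Kn$ contributes only one power of $\Kn$, so the second power must come either from the concentration of the Green's function $G$ near the measurement point $y\in\partial\Omega$ (which effectively shrinks the $L^1$ support of $\gamma_\Kn$) or from a further cancellation inside $\rho_0 G$ once it is tested against a bounded function. The boundary-layer contribution near $\partial\Omega$, where the Hilbert expansion of Theorem~\ref{thm:diffusion} degenerates, will also need to be handled separately via the half-space analysis so that the bound is uniform across $\phi$ and $y$.
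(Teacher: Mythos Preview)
Your reformulation of the admissible set and the interior asymptotic $\gamma_\Kn=\Kn\,\rho_f\rho_g+O(\Kn^3)$ are both correct and agree with the paper. The gap is in how you propose to extract the second power of $\Kn$. The diffusion limits $\rho_f,\rho_g$ are \emph{independent of $\Kn$}: they solve the elliptic problems~\eqref{eqn:rho-f-0}--\eqref{eqn:g-0} with boundary data $\xi_f,\xi_g$ coming from the half-space analysis. In particular $\rho_g$ is, up to layer corrections, the Poisson kernel $\partial_n G(\cdot,y)$ of the limiting operator, an $O(1)$ function whose $L^1$ norm over $\Omega$ does not shrink with $\Kn$. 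Consequently $\|\gamma_\Kn\|_{L^1(\Omega)}\sim\Kn$, not $\Kn^2$, and the H\"older step you describe only delivers $\kappa_a\gtrsim\delta/\Kn$. Your explicit candidate --- a scaled constant $h$ --- fails outright: for $\phi\ge 0$ one has $\rho_f\rho_g>0$, so $\langle\gamma_\Kn,h\rangle\approx\Kn\,h\int_\Omega\rho_f\rho_g\,\rd x$ is of order $\Kn\|h\|_{L^\infty}$; with $\|h\|_{L^\infty}=\delta/\Kn^2$ the pairing is of size $\delta/\Kn\gg\delta$ and $h$ is not admissible. Neither of your two suggested mechanisms (concentration of $G$ near $y$, or unspecified cancellation in $\rho_0 G$) supplies a $\Kn$-dependent smallness, because the leading kernel $\rho_f\rho_g$ is a fixed function of $x$.

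The paper gets the extra $\Kn$ not from the size of $\gamma_\Kn$ but from its \emph{structure}. The key observation is that the leading term $\rho_f(x)\rho_g(x)$, as $(\phi,y)$ vary, lies in the span of products $\partial_n G(x,y_i)\,\partial_n G(x,y_j)$ of Poisson kernels at boundary points; after discretizing the boundary at scale $\Kn$ this is a finite-dimensional subspace of $L^2(\Omega)$. One then \emph{chooses} the perturbation $c$ supported away from the layer and orthogonal to this span, which forces $\int_\Omega\rho_f\rho_g\,c\,\rd x=O(\Kn)$ uniformly in $(\phi,y)$. Substituting back gives $\langle\gamma_\Kn,c\rangle=O(\Kn^2)\|c\|_{L^\infty}+O(\Kn^3)$, so $\|c\|_{L^\infty}\sim\delta/\Kn^2$ becomes admissible. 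In short, the second factor of $\Kn$ comes from the approximate low rank of the leading kernel, and the test function must be tailored to exploit it; a generic or constant $h$ cannot.
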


\begin{proof}
Let $c(x)$ be an arbitrary function such that vanishes in the boundary layer and 
\begin{equation}\label{eqn:map_diff}
|\langle \gamma_\Kn,c\rangle_{L^2(\rd{x})}|\leq \delta\,,
\end{equation}
where $\gamma_\Kn $ is defined in \eqref{eqn:gamma00}. Then for such a $c(x)$, one certainly has
\[
\sigma_a(x)=\tilde{\sigma}_a(x)+c(x)\in \Gamma_\delta\,.
\]
According to Theorem~\ref{thm:diffusion}, when $\Kn$ is small, $f_0$ and $g$ approach the diffusion limit. Taking boundary layers into account, we write
\begin{equation*}
f_0=f_L+f_I\quad g=g_L+g_I\,,
\end{equation*}
where $f_L$ and $g_L$ stand for the layers of the two functions, and $f_I$ and $g_I$ are the interior solutions. The boundary layers are supported in a thin layer (denoted as $\Omega_L$) in the vicinity of the boundary $\partial\Omega$ with $\mathcal{O}(\Kn)$ width. The interior $f_I$ and $g_I$ supported on $\Omega_I = \Omega \backslash \Omega_L$ are well-approximated by the diffusion limit:
\begin{equation*}
f_I=\rho_f - \Kn v \cdot \nabla \rho_f +\mathcal{O}(\Kn^2 )\,,\quad g_I=\rho_g- \Kn v \cdot \nabla \rho_g +\mathcal{O}(\Kn^2)\,,
\end{equation*}
where $\rho_f,\rho_g$ solve the following equations:
\begin{equation} \label{eqn:rho-f-0}
\begin{cases}
& C\Delta _x \rho_f  = \sigma_{a0} \rho_f, 
\\
&\rho_f |_{\partial \Omega} = \xi_f(x)\,,
\end{cases}
\end{equation}
and
\begin{equation} \label{eqn:g-0}
\begin{cases}
&C \Delta_x \rho_g  = \sigma_{a0} \rho_g, 
\\
&\rho_g |_{\partial \Omega} =\xi_g(x)\,.
\end{cases}
\end{equation}
Therefore,
\begin{equation*}
\gamma_\Kn |_{\Omega_I}= \Kn \int  f_I g_I\rd{v} = \Kn  \rho_f\rho_g + \mathcal{O}(\Kn^3)\,.
\end{equation*}
Plugging it back into~\eqref{eqn:map_diff} and using the fact $c(x)=0,\forall x\in \Omega_L$, we obtain:
\begin{eqnarray}
\langle \gamma_\Kn,c\rangle_{L^2(\rd{x})}&=&\int_{\Omega_I} \gamma_\Kn (x)c(x)\rd x+\int_{\Omega_L}\gamma_\Kn(x)c(x)\rd x \nonumber
\\ &\sim&  \Kn \int_{\Omega_I}\rho_f(x)\rho_g(x) c(x)\rd{x}+\mathcal{O}(\Kn^3) \,. \label{eqn:207}
\end{eqnarray}
Let $G(x,y)$ be the Green's function for the operator $C\Delta- \sigma_{a0}$, i.e., 
\begin{equation} \label{eqn:Green}
\begin{cases}
&C \Delta_y G - \sigma_{a0} G = \delta_x(y), 
\\
&G|_{\partial \Omega} = 0\,.
\end{cases}
\end{equation}
Then
\begin{equation} \label{eqn:205}
\rho_f(x) =  \int_{\partial \Omega} \xi_f(y) \frac{\partial G}{\partial n} (x,y) \rd \mu(y),
\quad
\rho_g(x) =  \int_{\partial \Omega} \xi_g(y) \frac{\partial G}{\partial n}(x,y) \rd\mu(y) \,,
\end{equation}
where $\rd\mu(y)$ is the surface measure on $\partial \Omega$. Note that one can find $c(x)$ such that 
\begin{equation}\label{eqn:206}
\int_{\Omega_I} \rho_f(x) \rho_g(x) c(x) \rd{x} = \mathcal{O}(\Kn)\,.
\end{equation}
Indeed, using quadrature rule for \eqref{eqn:205}, one has
\begin{equation*}
\rho_f(x) = \sum_j \frac{\partial G(x,y_j)}{\partial n} \xi_f(y_j) w_j + \mathcal{O}(\Kn), \quad 
\rho_g(x) = \sum_j \frac{\partial G(x,y_j)}{\partial n} \xi_g(y_j) w_j + \mathcal{O}(\Kn)\,.
\end{equation*}
Thus
\begin{equation} \label{eqn:orthogonal1}
\int \rho_f \rho_g c \rd{x} = \sum_{i,j} \xi_f(y_i) \xi_g(y_j) w_i w_j \int \frac{\partial G(x,y_i)}{\partial n} \frac{\partial G(x,y_j)}{\partial n} c(x) \rd{x} +\|c\|_{L^\infty(\rd{x})} \mathcal{O}(\Kn)\,,
\end{equation}
then one just need to choose $c(x)$ such that it is perpendicular to the space spanned by $\{ \frac{\partial G(x,y_i)}{\partial n} \frac{\partial G(x,y_j)}{\partial n}, ~ \forall  i, j\}$. Plugging \eqref{eqn:206} into \eqref{eqn:207}, we derive that
\begin{equation}
\langle\gamma,c\rangle_{L^2(\rd{x})}\sim \|c\|_{L^\infty(\rd{x})} \mathcal{O}(\Kn^2)+\mathcal{O}(\Kn^3)\,.
\end{equation} 
It is seen that to ensure $|\langle\gamma,c\rangle_{L^2(\rd{x})}|<\delta$, one simply needs
\begin{equation*}
\|c\|_{L^\infty(\rd{x})} \sim\mathcal{O}\left(\frac{\delta}{\Kn^2}\right)\,.
\end{equation*}

Considering $\tilde{\sigma}_a$ is of $\mathcal{O}(1)$, there exists a constant $C$ such that $\kappa_a\geq C\frac{\delta}{\Kn^2}$, we finish the proof.

\end{proof}

\begin{remark}
Two immediate take-away information from the theorem:
\begin{itemize}
\item When $\delta$ is small, meaning that the measurement is relatively accurate, then one gets better recovery of the absorption coefficient as expected.
\item When $\Kn$ shrinks, the system approaches to the diffusion limit, and the distinguishability coefficient grows dramatically, indicating that the  linearized inverse problem is highly ill-conditioned. This phenomena is aligned with the ill-posedness of the Calder\'on problem. 
\end{itemize}
\end{remark}

\subsection{Ill-conditioning in the diffusion limit (discrete level)}
In this subsection, we revisit the above observation in the discrete setting when solving the inverse problem numerically, in which case the matrix to be inverted becomes highly ill-conditioned as $\Kn$ shrinks. Let us sample $N_x$ quadrature points in $\Omega$: $x_i,i=1,\ldots,N_x$, and their corresponding weights are denoted by $w_i$. Suppose the measurements are collected at discrete boundary points $\{y_k\}_{k=1}^{N_y}\subset \partial\Omega$, and there are $\phi_d, d = 1, 2, \cdots, N_\phi$ different sets of incoming data where $\sup_d \|\phi_d\|_{L_\infty(\Gamma^-)}\leq 1$. Then the linear system~\eqref{eqn:LS_abs} can be rewritten into the form of
\begin{equation}\label{eqn:LS_abs_dis}
\forall d = 1,\cdots,N_{\phi}, \quad k=1,\cdots, N_y: \quad \sum_{i=1}^{N_x} \gamma_\Kn(x_i;y_k,\phi_d)\tilde{\sigma}_a(x_i) w_i = b(y_k,\phi_d)\,,
\end{equation}
with
\begin{equation*}
\gamma_\Kn(x_i;y_k,\phi_d) =\Kn \int f_0(x_i, v; \phi_d)g(x_i, v; \delta_{y_k})\rd{v} \,,
\end{equation*}
and $f_0$ and $g$ solve~\eqref{eqn:RTE_abs_0} and~\eqref{eqn:RTE_abs_g} respectively with $\phi_d$ and $\delta_{y_k}$ as boundary condition, here $\delta_{y_k}$ is hat function centered at $x=y_k$. For simplicity we use index $p$ to denote sub-index $k$ and $d$. Then the linear system (\ref{eqn:LS_abs_dis}) can be further written into a compact form:
\begin{equation}\label{eqn:LS_abs_dis2}
\Amat \vec{ \tilde{\sigma}_a}=\vec{b}\,,
\end{equation}
where $\Amat \in \mathbb{R}^{N_p \times N_x}$ with entries $\Amat_{pi}=\gamma_\Kn(x_i; y_k, \phi_d)w_i$ such that $(k,d) = p$. $\vec{\tilde{\sigma}_a}\in \mathbb{R}^{N_x}$ is the discretization of $\tilde{\sigma}_a$ on $N_x$ quadrature nodes, and data $\vec{b}\in\mathbb{R}^{N_p}$. Here $N_p=N_y\times N_{\phi}$ denotes the total number of data points we have, and it is a product of $N_{\phi}$, the number of inflow data and $N_y$, the number of measurement positions. We show below that as $\Kn\to 0$ the matrix $\Amat$ becomes more and more singular, making the inversion impossible.
\begin{theorem} \label{theorem:abs}
Assume that $\Amat$ is nonsingular. When $\Kn$ is small,  the condition number of matrix $\Amat^T\Amat$ scales as

\begin{equation}
\kappa(\Amat^T \Amat) \geq C\frac{1}{\Kn} \,, \quad \text{for some constant} \quad C\,.
\end{equation}

Moreover, in 1D, $\Amat$ is approximately low rank, in the sense that it only has no more than 3 singular values of size $\mathcal{O}(\Kn)$, and all the rest are of size $\mathcal{O}(\Kn^{3/2})$.
 \end{theorem}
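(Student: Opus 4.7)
The overall approach is to exploit the diffusion approximation derived in Theorem~\ref{thm:diffusion} to decompose $\Amat$ into a low-rank leading part plus a small residual, and then apply singular value perturbation to conclude.

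First I would split $\Amat = \Kn\,\Xmat + \Amat_{\text{res}}$, where the leading part is
\[
\Xmat_{pi} = \rho_f(x_i;\phi_d)\,\rho_g(x_i;\delta_{y_k})\,w_i,
\]
with $\rho_f,\rho_g$ solving the diffusion equations~\eqref{eqn:rho-f-0}--\eqref{eqn:g-0}, and $\Amat_{\text{res}}$ collects the higher-order interior Chapman--Enskog terms together with the boundary-layer correction. The crucial structural observation in 1D is that both $\rho_f(\cdot;\phi_d)$ and $\rho_g(\cdot;\delta_{y_k})$ lie in the two-dimensional solution space of the ODE $C\rho''-\sigma_{a0}\rho=0$, so for any basis $\{u_1,u_2\}$ of that space the product $\rho_f\rho_g$ belongs to the 3-dimensional span of $\{u_1^2,\,u_1u_2,\,u_2^2\}$, independent of which $\phi_d$ or $y_k$ is used. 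Therefore $\Xmat$ factors as the product of an $N_p\times 3$ matrix and a $3\times N_x$ matrix, so $\Kn\,\Xmat$ has rank at most $3$ with nonzero singular values of order $\Kn$.

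To bound $\Amat_{\text{res}}$, I would split it further into interior and boundary-layer pieces. For the interior piece, the Chapman--Enskog expansion of $\int f_0\,g\,\rd v$ has its $\mathcal{O}(\Kn)$ term killed by $v$-parity, leaving a per-entry correction of size $\mathcal{O}(\Kn^3)$ proportional to $\partial_x\rho_f\,\partial_x\rho_g$ (which in fact also lies in the same 3-dimensional product space), and in any case yields only an $\mathcal{O}(\Kn^3)$ contribution to the operator norm. For the boundary-layer piece, the layer profiles obtained from the half-space problem in Theorem~\ref{thm:diffusion} are $\mathcal{O}(1)$ but concentrated in a strip of width $\mathcal{O}(\Kn)$; with a uniform quadrature of spacing $\Dx$ the number of nodes inside the layer scales like $\Kn/\Dx$ and each such entry has magnitude $\mathcal{O}(\Kn\, w_i)=\mathcal{O}(\Kn\Dx)$. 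Summing these contributions in Frobenius norm gives $\|\Amat_{\text{res}}^{\text{layer}}\|_F=\mathcal{O}(\Kn^{3/2})$, which dominates the interior piece and therefore bounds $\|\Amat_{\text{res}}\|_{\text{op}}$.

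Weyl's inequality $\sigma_k(\Amat)\le \sigma_k(\Kn\,\Xmat)+\|\Amat_{\text{res}}\|_{\text{op}}$ then yields $\sigma_k(\Amat)=\mathcal{O}(\Kn^{3/2})$ for every $k\ge 4$, which is the low-rank statement. For the condition number, I would lower bound $\sigma_1(\Amat)$ by selecting one informative pair $(\phi_d,\delta_{y_k})$---for instance a constant inflow so that $\rho_f$ is bounded below via the maximum principle---giving $\sigma_1(\Amat)\ge c\Kn$. Combined with $\sigma_{\min}(\Amat)\le \mathcal{O}(\Kn^{3/2})$ this produces
\[
\kappa(\Amat^T\Amat)=\frac{\sigma_1^2(\Amat)}{\sigma_{\min}^2(\Amat)}\gtrsim \frac{\Kn^2}{\Kn^3}=\frac{1}{\Kn}.
\]
The main obstacle I anticipate is the boundary-layer bookkeeping: pinning down the $\mathcal{O}(\Kn^{3/2})$ scaling requires quantitative control of the layer profile from the half-space problem together with careful counting of how many quadrature weights fall inside the $\mathcal{O}(\Kn)$-wide strip near $\partial\Omega$. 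A secondary subtlety is verifying that the leading three singular values of $\Kn\,\Xmat$ are genuinely of order $\Kn$, which requires the collection $\{(\phi_d,\delta_{y_k})\}$ to be rich enough that the $N_p\times 3$ factor in the factorization has full column rank.
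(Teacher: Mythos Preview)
Your proposal is correct and follows essentially the same strategy as the paper: isolate the leading $\Kn\,\rho_f\rho_g$ contribution, recognize in 1D that it lies in the three-dimensional span $\{u_1^2,u_1u_2,u_2^2\}$, bound the remainder, and apply eigenvalue perturbation. The paper differs only in bookkeeping and tooling: it splits $\Amat$ column-wise into layer and interior blocks rather than additively, works with $\Amat^T\Amat$ directly, and invokes the Gershgorin circle theorem after conjugating by the eigenbasis of $\Amat_I^{0T}\Amat_I^0$, whereas you work with $\Amat$ itself and use Weyl's inequality for singular values. Your route is arguably cleaner because Weyl's inequality handles the perturbation in one step, while the paper's Gershgorin argument requires first diagonalizing the leading block; on the other hand, the paper's column split makes the layer accounting slightly more transparent (the smallness comes simply from the number of layer columns being $\mathcal{O}(\Kn)$). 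Both approaches yield the same $\mathcal{O}(\Kn^{3/2})$ residual scale and the same rank-three structure, and your identification of the two subtleties---quantitative layer control and nondegeneracy of the $N_p\times 3$ factor---matches exactly the points the paper leaves implicit or defers to its appendix.
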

\begin{proof}
We first the prove the theorem in any dimension and refine the result in 1D. According to the diffusion theory, in the zero limit of $\Kn$, $\Amat$ can be decomposed into two parts:
\begin{equation} \label{eqn:A-decomp}
\Amat= \Kn \begin{pmatrix}
\Amat_L &\Amat_I
\end{pmatrix}\,,
\end{equation}
where $\Amat_L \in \mathbb{R}^{N_p \times N_L}, ~ (N_L \ll N_x)$ are the sampled points in the layer, and $\Amat_I\in \mathbb{R}^{N_p  \times (N_x-N_L)}$ represents the sampled points in the interior. To analyze the rank of the matrix, we rewrite $\Amat^T\Amat$ as
\[
\Amat^T\Amat= \Kn^2
\begin{pmatrix}
\Amat_L^T\Amat_L & \Amat^T_L\Amat_I \\
\Amat_I^T\Amat_L & \Amat^T_I\Amat_I \\
\end{pmatrix}\,.
 \] 
When $\Kn$ is small, $\Amat_I=\Amat^0_I+\mathcal{O}(\Kn)$, where each row in $\Amat_I^0$ are:
\begin{equation} \label{eqn:Aij}
  \rho_f(x_i; \phi_d)\rho_g(x_i; y_k) w_i, \qquad  i= 1, 2, \cdots N_x\,.
 \end{equation}
Therefore, $\Amat^T\Amat=\Kn^2 \Pmat+\mathcal{O}(\Kn^3)$, where
\[
\Pmat=
\begin{pmatrix}
\Amat_L^T\Amat_L & \Amat^T_L\Amat_I \\
\Amat_I^T\Amat_L & \Amat^{0T}_I\Amat^0_I 
\end{pmatrix}
=
\begin{pmatrix}
0 & 0\\
0& \Amat^{0T}_I\Amat_I^{0}
\end{pmatrix}
+
\begin{pmatrix}
\Amat_L^T\Amat_L & \Amat^T_L\Amat_I\\
\Amat_I^T\Amat_L & 0\\
\end{pmatrix}\,.
\]
Since $\Amat_I^{0T}\Amat_I^0$ is symmetric, it is diagonizable. Denote:
\begin{equation}
\Amat_I^{0T}\Amat_I^0 = \Qmat\Dmat\Qmat^{-1}\,,
\end{equation}
with $\Qmat\in \mathbb{R}^{(N_x-N_L)\times (N_x-N_L)}$ the collection of eigenvectors and $\Dmat$ the diagonal matrix of eigenvalues. Then we multiply both sides by matrix $\Xmat$ and $\Xmat^{-1}$ defined as below
\[
\Xmat=
\begin{pmatrix}
\mathbb{I} & 0\\
0& \Qmat\\
\end{pmatrix}
,\quad
\Xmat^{-1}=
\begin{pmatrix}
\mathbb{I} & 0\\
0& \Qmat^{-1}\\
\end{pmatrix}\,,
\]
we derive that
\[
\Xmat^{-1}\Amat^{T}\Amat\Xmat= \Kn^2
\begin{pmatrix}
0& 0\\
0& \Dmat\\
\end{pmatrix}
+
\Kn^2\begin{pmatrix}
\Amat_L^T\Amat_L & \Amat^T_L\Amat_I\Qmat\\
\Qmat^{-1}\Amat_I^T\Amat_L & 0  \\
\end{pmatrix}\, + \mathcal{O}(\Kn^3)\,.
\]
Note that the number of elements in $A_L$ is of order $\mathcal{O}(\Kn)$ due to the fact that the layer length is order of $\mathcal{O}(\Kn)$, and its elements are order $\mathcal{O}(1)$ thanks to the maximal principle: the integrand function $|f_Lg_L|\leq \|\phi\|_{L^\infty(\Gamma_-)}\|\delta_{y_k}\|_{L^\infty(\rd{x})}=\mathcal{O}(1)$.
 Therefore, we could rewrite the equation above:
\[
\Xmat^{-1}\Amat^{T}\Amat\Xmat= \Kn^2
\begin{pmatrix}
0& 0\\
0& \Dmat\\
\end{pmatrix}
+\mathcal{O}(\Kn^3) := \Kn^2 \Dmat_1 + \mathcal{O}(\Kn^3)\,.
\]
By Gershgorin circle theorem, all eigenvalues lie in Gershgorin disc, meaning that $|\lambda(\Amat^T\Amat)-\lambda(\Kn^2 \Dmat_1)|<\mathcal{O}(\Kn^3)$. 
Since the eigenvalues in $\Dmat_1$ can be $\mathcal{O}(1)$ or 0, the largest eigenvalue in $\Amat^T\Amat$ is $\mathcal{O}(\Kn^2)$ and the smallest is $\mathcal{O}(\Kn^3)$, the condition number of $\Amat^T \Amat$ is larger than $\mathcal{O}\left( \frac{1}{\Kn}\right)$. 

Moreover, in 1D, we show in the appendix that $\Dmat$ is indeed a low rank matrix itself with rank not exceeding 3. Therefore, there are at most 3 nonzero eigenvalues in $\Dmat$ and all the rest are zeros. Putting this information back to $\Amat$, the result directly follows. 


\end{proof}

\begin{remark}
We emphasize the difference between Theorem~\ref{thm:Dis} and Theorem~\ref{theorem:abs}. Theorem~\ref{thm:Dis} is the study of $\kappa_a$, which represents $\|\mathcal{A}^{-1}\|$: it tells that suppose the measurement is different, how different could $\sigma_a$ be. However, numerically it is the condition number of $\Amat$ that is playing the role. If $\Amat$ is low rank, for example, rank $k$ out of an $n$ dimensional space, then there are infinite many $\sigma_a$ that could lead to the same measurement and the space they span is $n-k$ dimensional.
\end{remark}

Similar to the analysis on the continuous level, we immediately conclude that the ill-conditioned matrix $\Amat$ in $\Kn\to0$ limit leads to the fact that $\sigma_a$ is hard to be recovered accurately, which is consistent with the ill-posedness of the Calder\'on problem. More precisely, we have the following estimate theorem. 

\begin{theorem}
Define the distinguishability coefficient in the discrete setting as
\begin{equation*}
\kappa_a = \sup_{\sigma_a \in \Gamma_\delta} \frac{\| \sigma_a - \tilde{\sigma}_a \|}{ \|\tilde{\sigma}_a \|}\,,
\end{equation*}
where $\|\cdot\|$ denote vector $l^2$-norm and $\Amat \tilde{\sigma}_a = b$, and $\Gamma_\delta = \{ \sigma_a: \| \Amat \sigma_a - b \| \leq \delta \}$. Assume that $\Amat$ is nonsingular, then there exists a constant $C$ such that

\begin{equation} \label{eqn:222}
 \kappa_a \geq  C \frac{\delta}{ \Kn^2 + \Kn \Delta x}\,.
 \end{equation}
 
 \end{theorem}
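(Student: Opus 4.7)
The plan is to mirror the proof of Theorem~\ref{thm:Dis} at the discrete level by constructing an explicit perturbation vector $c\in\RR^{N_x}$ that belongs to $\Gamma_\delta$ and whose $\ell^2$ norm is as large as possible. The starting observation is that any $c$ with $\|\Amat c\|\leq\delta$ yields a candidate $\tilde\sigma_a+c\in\Gamma_\delta$, so that $\kappa_a\geq \|c\|/\|\tilde\sigma_a\|$. Assuming the normalization $\|\tilde\sigma_a\|=\mathcal{O}(1)$, the job reduces to exhibiting a single $c$ with $\|\Amat c\|\leq\delta$ and $\|c\|$ lower-bounded by the claimed rate.

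To build such a $c$, I would use the block decomposition $\Amat=\Kn(\Amat_L,\Amat_I)$ from~\eqref{eqn:A-decomp}, together with the diffusion expansion $\Amat_I=\Amat_I^0+\mathcal{O}(\Kn^2)$, where the rows of $\Amat_I^0$ have the form~\eqref{eqn:Aij}, namely $(\Amat_I^0)_{p,i}=\rho_f(x_i;\phi_d)\rho_g(x_i;y_k)w_i$. I would take $c$ to vanish at every boundary-layer grid index so that the $\Amat_L$ block drops out, and set the remaining interior entries to the samples $\hat c(x_i)$ of a smooth function $\hat c$ supported in $\Omega_I$ chosen exactly as in the continuous proof: $\hat c$ should be $L^2$-orthogonal to the finite-dimensional span of $\bigl\{\tfrac{\partial G}{\partial n}(\cdot,y_i)\tfrac{\partial G}{\partial n}(\cdot,y_j)\bigr\}_{i,j}$ from~\eqref{eqn:orthogonal1}. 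A dimension count guarantees the existence of such a $\hat c$ with $\|\hat c\|_\infty=1$ as long as $N_x$ is taken large relative to $N_p$.

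With this choice in hand, I would split $(\Amat c)_p$ into a leading discrete Green's-function integral plus a diffusion remainder,
\begin{equation*}
(\Amat c)_p \;=\; \Kn\,(\Amat_I^0 c_I)_p \;+\; \mathcal{O}(\Kn^3)\|c\|_\infty\,,
\end{equation*}
and then control the first term by combining the continuous analysis~\eqref{eqn:206}--\eqref{eqn:207} with the elementary quadrature estimate $\bigl|\sum_i h(x_i)w_i-\int h\,\rd x\bigr|=\mathcal{O}(\Dx)\|h\|$, giving
\begin{equation*}
(\Amat_I^0 c_I)_p \;=\; \int_{\Omega_I}\rho_f\rho_g\hat c\,\rd x \;+\; \mathcal{O}(\Dx)\|\hat c\|_\infty \;=\; \mathcal{O}(\Kn+\Dx)\|\hat c\|_\infty\,.
\end{equation*}
Plugging these back yields $\|\Amat c\|\lesssim(\Kn^2+\Kn\Dx)\|c\|$; enforcing $\|\Amat c\|\leq\delta$ then forces $\|c\|\gtrsim \delta/(\Kn^2+\Kn\Dx)$, and~\eqref{eqn:222} follows after dividing by $\|\tilde\sigma_a\|$.

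The main obstacle is the careful bookkeeping between two essentially different discretization errors. The $\mathcal{O}(\Kn^2)$ diffusion correction per matrix entry has to be summed over $\mathcal{O}(N_x)$ interior nodes without blowing up the bound, which is the origin of the $\Kn^2$ contribution, while the $\mathcal{O}(\Dx)$ quadrature mismatch between the continuous orthogonality built into $\hat c$ and the discrete rows of $\Amat_I^0$, once amplified by the $\Kn$ prefactor in~\eqref{eqn:A-decomp}, produces the $\Kn\Dx$ term. Ensuring these combine cleanly into $\Kn^2+\Kn\Dx$, rather than producing mixed higher-order pollution that would weaken the estimate, is the chief technical difficulty; one also has to check that the vanishing-on-boundary-layer construction is compatible with the standing assumption that $\Amat$ is nonsingular, so that the admissible $c$ actually exists for all sufficiently small $\Kn$.
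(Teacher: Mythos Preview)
Your proposal is correct and follows essentially the same construction as the paper's proof: decompose $\Amat=\Kn(\Amat_L,\Amat_I)$, kill the layer block by setting $c_L=0$, and choose the interior part of $c$ to be (approximately) orthogonal to the diffusion-limit rows $\rho_f(x_i)\rho_g(x_i)w_i$ so that only an $\mathcal{O}(\Kn^2+\Kn\Dx)$ residual survives.

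The one genuine difference is in where the mesh-size term enters. You build a \emph{continuous} $\hat c$ with exact $L^2$-orthogonality to the finite span of Green's-function products (as in~\eqref{eqn:orthogonal1}) and then incur $\mathcal{O}(\Dx)$ as an \emph{interior} quadrature error when passing from $\int_{\Omega_I}\rho_f\rho_g\hat c\,\rd x$ to $\sum_i\rho_f(x_i)\rho_g(x_i)\hat c(x_i)w_i$. The paper instead chooses $c_I$ with exact \emph{discrete} orthogonality to the sampled Green's-function products $\{\partial_nG(x_i,y_j)\partial_nG(x_i,y_k)\}_i$, and then the residual $\mathcal{O}(\Delta y)$ comes from the \emph{boundary} quadrature in the representation $\rho_f(x_i)=\sum_j\partial_nG(x_i,y_j)\xi_f(y_j)w_j+\mathcal{O}(\Delta y)$. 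Your route is the more literal discretization of the continuous Theorem~\ref{thm:Dis} argument; the paper's route stays entirely inside the discrete linear algebra and needs only the dimension count $N_\phi N_y<N_x$ rather than any smoothness of $\hat c$. Either bookkeeping yields the claimed $\Kn^2+\Kn\Dx$ rate.
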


\begin{proof}
We again decompose $\Amat$ as in \eqref{eqn:A-decomp}. Note that for each row of $\Amat_I$, we have
\begin{equation*}
(\Amat_I)_i = \int f_0(x_i, v; \phi_d) g(x_i, v; y_k) \rd{v}  w_i = \rho_f(x_i; \phi_d) \rho_g(x_i; y_k) w_i + \mathcal{O}(\Kn^2)\,,
\end{equation*}
where
\begin{eqnarray*}
&&\rho_f(x_i; \phi_d) = \sum_j \frac{\partial G(x_i, y_j)}{\partial n} \xi_{\phi_d}(y_j) w_j= \int \frac{\partial G(x_i, y)}{\partial n } \xi_{\phi_d}(y) \rd{y} + \mathcal{O}(\Delta y ) \,, \label{eqn:rhof-d}
\\ &&\rho_g(x_i; y_k) = \sum_j \frac{\partial G(x_i, y_j)}{\partial n} \xi_{\delta_{y_k}}(y_j) w_j= \int \frac{\partial G(x_i, y)}{\partial n } \xi_{\delta_{y_k}}(y) \rd{y} + \mathcal{O}(\Delta y ) \,. \label{eqn:rhog-d}
\end{eqnarray*} 
Here $G(x,y)$ is the Green's function defined in \eqref{eqn:Green} and $\Delta y = \max_i \Delta y _i$. Denote $\vec{c} = \vec{\sigma}_a - \vec{\tilde{\sigma}}_a$, then 
\begin{equation} \label{eqn:209}
\|\Amat\vec{c}\| \leq \delta\,. 
\end{equation}
Since
\begin{equation*}
 \Amat \vec{c} =  \Kn  \Amat_L \vec{c}_L   + \Kn   \Amat_I \vec{c}_I = \mathcal{O}(\Kn^2)  \vec{c}_L  + \mathcal{\Kn}   \Amat_I \vec{c}_I \,,
\end{equation*}
and from \eqref{eqn:rhof-d}, one can always choose $\vec{c}_I$ as long as $N_\phi \times N_y < N_x$ (just pick $c_I$ from \eqref{eqn:orthogonal1} ) such that 
\[
\sum_{i=1}^{N_x} \rho_f(x_i; \phi_d) \rho_g(x_i; y_k) \vec{c}_I(x_i) w_i = \mathcal{O}(\Delta y), \quad \forall \phi_d, ~ y_k\,,
\]
we have 
\begin{equation*}
 \Amat \vec{c} = \mathcal{O}(\Kn^2) \vec{c}_L + \mathcal{O}(\Kn \Delta y) \vec{c}_I\,.
\end{equation*}
Therefore, the requirement \eqref{eqn:209} implies \eqref{eqn:222}.
\end{proof}

\section{Recover Scattering Coefficient $\sigma_s$}
In this section we discuss how to recover the scattering coefficient given $\sigma_a$. In subsection~\ref{sec:sigma_s_set_up} we set up the inverse problem, and the following two subsections are devoted to the non-injectivity in 1D and ill-conditioning in multi-dimension in the zero limit of the Knudsen number.

\subsection{Inverse problem set-up}\label{sec:sigma_s_set_up}
We recall the equation again:
\begin{equation}\label{eqn:RTE_sca}
\begin{cases}
v\cdot \nabla_x f =  \frac{1}{\Kn}\sigma_s \mathcal{L} f - \Kn \sigma_a f \,,\quad &(x,v)\in\Omega\times\mathbb{S}\,,\\
 f |_{\Gamma_-}=\phi\,. &  \\
\end{cases}
\end{equation}
Here $\sigma_a$ as known, and we make a guess for $\sigma_s$, and linearize the equation around $\sigma_{s0}$, assuming the deviation $|\tilde{\sigma}_s|:=|\sigma_s-\sigma_{s0}|$ is much smaller than $\sigma_s$. The background solution $f_0$ solves
\begin{equation}\label{eqn:RTE_sca_0}
\begin{cases}
v \cdot \nabla_x f_0 = \frac{1}{\Kn} \sigma_{s0} \opL f_0 -\Kn \sigma_a f_0 \,,
\\ f_0 |_{\Gamma_-} = \phi\,.
\end{cases}
\end{equation}
As done in the last section, we drop the higher order terms, and the fluctuation $\tilde{f}:=f-f_0$ satisfies the following equation:
\begin{equation}\label{eqn:RTE_sca_tilde}
\begin{cases}
v\cdot\nabla_x \tilde{f} = \frac{1}{\Kn}\sigma_{s0} \mathcal{L} \tilde{f}+ \frac{1}{\Kn}\tilde{\sigma}_s\mathcal{L}f_0 - \Kn \sigma_a \tilde{f}\,,\\
\tilde{f}|_{\Gamma_-}=0\,. \\
\end{cases}
\end{equation}
To recover $\tilde{\sigma}_s$, we look for the linear mapping from the incoming information $\phi$ to some computable quantity $b(\delta_y, \phi)$ (to be determined below), i.e.,
\[
 \phi \rightarrow b(\delta_y, \phi)\,.
\]
To this end, we consider an auxiliary function $g(x,v)$ that satisfies the adjoint problem:
\begin{equation}\label{eqn:RTE_sca_g}
\begin{cases}
-v\cdot\nabla_x g= \frac{1}{\Kn}\sigma_{s0} \mathcal{L}g - \Kn \sigma_a g \,, \\
g|_{\Gamma_+}=\delta_{y}(x) \,. \\
\end{cases}\,
\end{equation}
Multiply Equation~\eqref{eqn:RTE_sca_tilde} with $g$ and~\eqref{eqn:RTE_sca_g} with $\tilde{f}$ and compare these two equations, with the Green's identity, one gets:
\begin{equation}\label{eqn:RTE_sca_IBP}
\int_{\Gamma_+(y)} v\cdot n(y) \tilde{f}(y,v;\phi)\rd v=\frac{1}{\Kn} \int \tilde{\sigma}_s(x)\int g(x,v; \delta_y)\mathcal{L}f_0(x,v;\phi) \rd v\rd x\,.
\end{equation}
Then we define:
\begin{equation}\label{eqn:gamma-sca-crit}
\gamma_\Kn(x; \delta_y,\phi):= \frac{1}{\Kn}\int g(x,v; \delta_y)\mathcal{L}f_0(x,v;\phi)\rd{v}\,,
\end{equation}
then \eqref{eqn:RTE_sca_IBP} becomes:
\begin{equation}\label{eqn:A_critical}
 \int \gamma_\Kn (x;\delta_y,\phi)\tilde{\sigma}_s(x) \rd{x}=b(\delta_y,\phi)\,,
\end{equation}
where
\begin{equation*}
b(\delta_y, \phi) = \mathcal{M}(f-f_0)(y;\phi)\,,
\end{equation*}
which is again the difference between the measured data and computed data, given by the boundary condition $\phi$, evaluated at $y$. The inverse problem then is equivalent to the Fredholm first type problem: for all $y$ and $\phi$, we prepare $\gamma(x;\delta_y,\phi)$ and $b(\delta_y,\phi)$, and use them to invert for $\sigma_s$. For the ease of notation we write $b(y,\phi)$ as $b(\delta_y,\phi)$ with $\delta_y$ representing the boundary condition for $g$.

\subsection{Non-injectivity in 1D}
Similar to the case of recovering $\sigma_a$, recovering $\sigma_s$ becomes harder as $\Kn$ shrinks to zero. In 1D, formulae can be made explicitly.

We first restrict our attention to the critical case by setting $\sigma_a \equiv 0$. When there is no absorption, the only interaction between particles is scattering, and thus mass is preserved. We show below that in this case, the problem is non-injective in the sense that $\gamma$ cannot provide enough variations to distinguish $\sigma_s$ at different $x$, and that different $\sigma_s$ could lead to the same measurement provided the same data. 
 
\begin{proposition}\label{thm:RTE_sca}
Given arbitrary $\phi_d$ and $\delta_y$, let $f_0$ solve~\eqref{eqn:RTE_sca_0} and $g$ solve~\eqref{eqn:RTE_sca_g}. Let $\gamma_\Kn$ be defined in~\eqref{eqn:gamma-sca-crit}. Then in 1D, if $\sigma_a \equiv 0$, $\gamma_\Kn$ is a constant independent of $x$.
\end{proposition}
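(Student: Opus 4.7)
The plan is to show $\partial_x\gamma_\Kn = 0$ by a direct calculation that exploits the self-adjointness of $\mathcal{L}$ together with the fact that in 1D the collision terms can be written as first-order spatial derivatives. When $\sigma_a \equiv 0$ the equations~\eqref{eqn:RTE_sca_0} and~\eqref{eqn:RTE_sca_g} reduce to $v\partial_x f_0 = \frac{\sigma_{s0}}{\Kn}\mathcal{L}f_0$ and $-v\partial_x g = \frac{\sigma_{s0}}{\Kn}\mathcal{L}g$, which I invert to get $\mathcal{L}f_0 = \frac{\Kn v}{\sigma_{s0}}\partial_x f_0$ and $\mathcal{L}g = -\frac{\Kn v}{\sigma_{s0}}\partial_x g$. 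Substituting into~\eqref{eqn:gamma-sca-crit} and using self-adjointness $\int g\mathcal{L}f_0\,\rd v = \int f_0\mathcal{L}g\,\rd v$ (immediate from $\mathcal{L}f = \langle f\rangle - f$ with normalized velocity measure), I obtain two equivalent representations
\[
\sigma_{s0}\,\gamma_\Kn \;=\; \int v g\,\partial_x f_0\,\rd v \;=\; -\int v f_0\,\partial_x g\,\rd v.
\]

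Differentiating each representation in $x$, the cross terms $\int v(\partial_x g)(\partial_x f_0)\,\rd v$ coincide, so subtracting the two derived identities yields
\[
2\sigma_{s0}\,\partial_x\gamma_\Kn + 2\sigma'_{s0}\,\gamma_\Kn \;=\; \int v g\,\partial_x^2 f_0\,\rd v \;-\; \int v f_0\,\partial_x^2 g\,\rd v.
\]
I then rewrite $v\partial_x^2 f_0 = \partial_x(v\partial_x f_0) = \frac{\sigma'_{s0}}{\Kn}\mathcal{L}f_0 + \frac{\sigma_{s0}}{\Kn}\mathcal{L}(\partial_x f_0)$ and the analogous expression for $g$ (with the opposite sign, inherited from the adjoint). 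The $\sigma'_{s0}$-pieces on the right contribute exactly $2\sigma'_{s0}\gamma_\Kn$, while the $\sigma_{s0}$-pieces collect into $\frac{\sigma_{s0}}{\Kn}\bigl[\int g\,\mathcal{L}(\partial_x f_0)\,\rd v + \int f_0\,\mathcal{L}(\partial_x g)\,\rd v\bigr]$.

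The crucial cancellation happens at this last step: applying self-adjointness once more and then substituting the first-order forms of $\mathcal{L}f_0$ and $\mathcal{L}g$, the two integrals become $\mp\frac{\Kn}{\sigma_{s0}}\int v(\partial_x f_0)(\partial_x g)\,\rd v$ and therefore sum to zero. What survives is $2\sigma_{s0}\partial_x\gamma_\Kn + 2\sigma'_{s0}\gamma_\Kn = 2\sigma'_{s0}\gamma_\Kn$, which forces $\partial_x\gamma_\Kn = 0$ since $\sigma_{s0}>0$, and the proposition follows.

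The main obstacle is not any single step but the bookkeeping: self-adjointness must be invoked twice (once to produce the symmetric representation of $\gamma_\Kn$, once to kill the second-derivative contributions), and the sign flip between the forward and adjoint equations must be tracked throughout. Conceptually the result reflects a conservation law in the absorption-free 1D regime, in the same spirit as the constancy in $x$ of the individual currents $\int vf_0\,\rd v$ and $\int vg\,\rd v$ that one obtains by integrating each PDE in $v$.
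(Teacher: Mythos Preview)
Your argument is correct; the only slip is verbal: where you write ``subtracting the two derived identities'' you are in fact \emph{adding} them (the cross terms carry opposite signs, so they cancel upon addition), and indeed your displayed formula is precisely the sum. Everything after that checks out, including the final self-adjointness cancellation.

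The paper reaches the same conclusion by a somewhat shorter path that avoids second derivatives entirely. It writes $\gamma_\Kn=\tfrac{1}{\Kn}\bigl(\langle f_0\rangle\langle g\rangle-\langle f_0g\rangle\bigr)$ and computes $\tfrac{\rd}{\rd x}\langle f_0g\rangle=\langle g\,\partial_x f_0\rangle+\langle f_0\,\partial_x g\rangle$ directly, then substitutes the algebraic relations $g=\langle g\rangle+\tfrac{\Kn}{\sigma_{s0}}v\,\partial_x g$ and $f_0=\langle f_0\rangle-\tfrac{\Kn}{\sigma_{s0}}v\,\partial_x f_0$ (equivalent to your $\mathcal{L}f_0=\tfrac{\Kn v}{\sigma_{s0}}\partial_x f_0$, $\mathcal{L}g=-\tfrac{\Kn v}{\sigma_{s0}}\partial_x g$). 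The single cross term $\tfrac{\Kn}{\sigma_{s0}}\langle v\,\partial_x f_0\,\partial_x g\rangle$ appears once with each sign and cancels, giving $\tfrac{\rd}{\rd x}\langle f_0g\rangle=\tfrac{\rd}{\rd x}\bigl(\langle f_0\rangle\langle g\rangle\bigr)$ immediately. Your route uses the same substitutions but differentiates $\sigma_{s0}\gamma_\Kn$ instead of $\langle f_0g\rangle$, which forces you through $\partial_x^2$-terms and a second application of self-adjointness; the extra bookkeeping you flag is genuine but self-inflicted. Either way the mechanism is identical: in the critical 1D case the forward/adjoint sign flip makes the mixed first-derivative term cancel.
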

\begin{proof}
Here we drop the $\Kn$ dependence in the proof as it will change the result. Denote $\langle f\rangle=\frac{1}{2}\int_{-1}^1 f(x,v)\rd{v}$, then
\begin{equation*}
\frac{\rd}{\rd x}\gamma_\Kn=\frac{1}{\Kn}\frac{\rd}{\rd x}(\langle f_0\rangle \langle g\rangle-\langle f_0g\rangle ) \,.
\end{equation*}
Notice that 
\begin{equation*}
g=\langle g\rangle +\frac{1}{\sigma_{s0}}v \partial_x g\,,\qquad
f_0 =\langle f_0\rangle-\frac{1}{\sigma_{s0}}v \partial_x f_0\,,
\end{equation*}
thus
\begin{align*}
\frac{\rd}{\rd x}\langle f_0 g\rangle&=\langle \partial_x f_0 g\rangle+\langle f_0\partial_x g\rangle\\
&=\langle \partial_xf_0  \rangle\langle g\rangle +\frac{1}{\sigma_{s0}}\langle \partial_xf_0 v\partial_x g\rangle \\
&\quad +\langle f_0\rangle \langle \partial_xg\rangle -\frac{1}{\sigma_{s0}}\langle \partial_xf_0 v\partial_x g\rangle \\
&=\frac{\rd}{\rd x}(\langle f_0\rangle\langle g\rangle)\,,
\end{align*}
which readily implies that $\frac{\rd}{\rd x}\gamma_\Kn=0$\,.

\end{proof}
The non-injectivity is immediate:
\begin{theorem}
In 1D critical case, the inverse problem for~\eqref{eqn:A_critical} is non-injective.
\end{theorem}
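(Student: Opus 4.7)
The plan is to leverage the preceding proposition, which asserts that in the 1D critical case $\gamma_\Kn(x;\delta_y,\phi)$ is independent of $x$. Once this is in hand, the forward map~\eqref{eqn:A_critical} reduces from an integral operator against a nontrivial kernel to simple multiplication by a scalar times the integral of $\tilde{\sigma}_s$ over the domain, and non-injectivity follows essentially for free.

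More concretely, I would first write $\gamma_\Kn(x;\delta_y,\phi) = c(\delta_y,\phi)$ for some scalar depending only on the boundary data $\phi$ and the measurement location $y$. Substituting into~\eqref{eqn:A_critical} yields
\begin{equation*}
 c(\delta_y,\phi)\int_0^1 \tilde{\sigma}_s(x)\,\rd x = b(\delta_y,\phi)\,,
\end{equation*}
so the entire family of measurements $\{b(\delta_y,\phi)\}_{y,\phi}$ depends on $\tilde{\sigma}_s$ only through the single scalar $\int_0^1 \tilde{\sigma}_s(x)\,\rd x$. This exhibits an enormous null space: any mean-zero perturbation of $\tilde{\sigma}_s$ is invisible to the measurement operator.

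To make the non-injectivity fully explicit, I would then construct two distinct candidates yielding identical data. Pick any nontrivial $h\in L^2(0,1)$ with $\int_0^1 h(x)\,\rd x = 0$ (for instance $h(x)=\sin(2\pi x)$), and set $\tilde{\sigma}_s^{(1)}$ to be any admissible profile and $\tilde{\sigma}_s^{(2)} = \tilde{\sigma}_s^{(1)}+h$. By the displayed identity above, both produce the same $b(\delta_y,\phi)$ for every $\phi$ and $y$, while $\tilde{\sigma}_s^{(1)}\neq \tilde{\sigma}_s^{(2)}$. Hence the forward map is not injective.

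No real obstacle is anticipated here, since the hard analytic work was already done in the proposition; what remains is a short algebraic observation followed by the explicit counterexample. The only thing to be careful about is phrasing: I should emphasize that the degeneracy is not a mere rank-one deficiency but rather a codimension-one range, i.e.\ the kernel of the forward map is the entire mean-zero subspace of $L^2(0,1)$, which makes the non-injectivity as severe as possible and foreshadows the ill-conditioning discussed for the multi-dimensional scattering case in the next subsection.
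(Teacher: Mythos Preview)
Your proposal is correct and follows essentially the same approach as the paper: invoke the proposition to conclude $\gamma_\Kn$ is constant in $x$, so the map~\eqref{eqn:A_critical} sees only $\int_0^1\tilde{\sigma}_s\,\rd x$ and any mean-zero perturbation lies in the kernel. The paper's proof is just a terser version of yours, omitting the explicit counterexample.
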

\begin{proof}
According to proposition~\ref{thm:RTE_sca}, $\gamma_\Kn$ is a constant, and thus one can only get $\int\tilde{\sigma}_s\rd{x}$. Therefore, the variation in $\tilde{\sigma}_s$ is not recoverable, and the problem is non-injective.
\end{proof}
 
In the subcritical case with $\sigma_a >0$, the recovery of the scattering coefficient is very similar.
\begin{proposition}\label{prop:RTE_sca2}
For arbitrary inflow data $\phi$ and Dirac delta function $\delta_y$, as the Knudsen number $\Kn \rightarrow 0$, $\frac{\rd}{\rd x}\gamma_\Kn \rightarrow 0$.
\end{proposition}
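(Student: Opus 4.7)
The plan is to carry out the same algebraic manipulation as in the proof of Proposition~\ref{thm:RTE_sca}, but this time \emph{keep track of} the correction produced by the $\Kn \sigma_a$ term in \eqref{eqn:RTE_sca_0}--\eqref{eqn:RTE_sca_g}. I expect that the cancellation that made $\frac{\rd}{\rd x}\gamma_\Kn$ identically zero in the critical case will now fail only by a factor of order $\Kn^2$, which, combined with the prefactor $1/\Kn$ in the definition~\eqref{eqn:gamma-sca-crit}, yields $\frac{\rd}{\rd x}\gamma_\Kn = \mathcal{O}(\Kn)$.

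First, rewrite the transport equations \eqref{eqn:RTE_sca_0} and \eqref{eqn:RTE_sca_g} in 1D as
\begin{equation*}
f_0 = \langle f_0\rangle - \frac{\Kn v}{\sigma_{s0}}\partial_x f_0 - \frac{\Kn^2 \sigma_a}{\sigma_{s0}} f_0,
\qquad
g = \langle g\rangle + \frac{\Kn v}{\sigma_{s0}}\partial_x g - \frac{\Kn^2 \sigma_a}{\sigma_{s0}} g.
\end{equation*}
Substitute the second identity into $\langle(\partial_x f_0)g\rangle$ and the first into $\langle f_0(\partial_x g)\rangle$, add, and observe that the two $\frac{\Kn}{\sigma_{s0}}\langle v\,\partial_x f_0\,\partial_x g\rangle$ contributions cancel exactly, as in Proposition~\ref{thm:RTE_sca}. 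What remains is
\begin{equation*}
\frac{\rd}{\rd x}\langle f_0 g\rangle \;=\; \frac{\rd}{\rd x}\bigl(\langle f_0\rangle\langle g\rangle\bigr) \;-\; \frac{\Kn^2 \sigma_a}{\sigma_{s0}}\,\frac{\rd}{\rd x}\langle f_0 g\rangle.
\end{equation*}

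Next I would solve this algebraically for $\frac{\rd}{\rd x}\langle f_0 g\rangle$ and substitute into $\gamma_\Kn = \frac{1}{\Kn}(\langle f_0\rangle\langle g\rangle - \langle f_0 g\rangle)$ to obtain the key identity
\begin{equation*}
\frac{\rd}{\rd x}\gamma_\Kn \;=\; \frac{\Kn\,\sigma_a/\sigma_{s0}}{1+\Kn^2\sigma_a/\sigma_{s0}}\;\frac{\rd}{\rd x}\bigl(\langle f_0\rangle\langle g\rangle\bigr).
\end{equation*}
This is a purely algebraic consequence of the equations, valid for any $\Kn>0$, and it localises the $\Kn$-dependence of $\frac{\rd}{\rd x}\gamma_\Kn$ into a single explicit prefactor that is $\mathcal{O}(\Kn)$.

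Finally, I would invoke the diffusion limit in Theorem~\ref{thm:diffusion}: away from the boundary layer, $\langle f_0\rangle \to \rho_f$ and $\langle g\rangle\to \rho_g$, where $\rho_f$, $\rho_g$ solve elliptic problems analogous to \eqref{eqn:rho-f-0}--\eqref{eqn:g-0} with the diffusion coefficient $C/\sigma_{s0}$. In particular $\frac{\rd}{\rd x}(\rho_f\rho_g)$ is bounded uniformly in $\Kn$ in the interior by standard elliptic regularity. Multiplying by the $\mathcal{O}(\Kn)$ prefactor then gives $\frac{\rd}{\rd x}\gamma_\Kn \to 0$ as $\Kn\to 0$.

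The main obstacle is the last step rather than the algebra: the adjoint datum $g|_{\Gamma_+}=\delta_y(x)$ is singular, so the limit $\rho_g$ satisfies an elliptic problem with irregular (distributional) boundary condition, and uniform boundedness of $\partial_x\langle g\rangle$ must be handled with care in a neighbourhood of $y$. On the interior $\Omega_I$ used in Section~3 this follows from the same asymptotic expansion and half-space analysis cited in Theorem~\ref{thm:diffusion}, and I would restrict attention to that interior region (as is done throughout Section~3) to close the argument.
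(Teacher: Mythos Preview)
Your proposal is correct and is essentially the same argument as the paper's: both derive the exact identity
\[
\frac{\rd}{\rd x}\gamma_\Kn \;=\; \frac{\Kn}{\Kn^2+\sigma_{s0}/\sigma_a}\,\frac{\rd}{\rd x}\bigl(\langle f_0\rangle\langle g\rangle\bigr)
\]
(your prefactor $\frac{\Kn\sigma_a/\sigma_{s0}}{1+\Kn^2\sigma_a/\sigma_{s0}}$ is algebraically the same), the only cosmetic difference being that the paper isolates $f_0$ and $g$ by dividing the transport equations through by $\Kn\sigma_a$ rather than by $\sigma_{s0}$ as you do. Your discussion of the uniform boundedness of $\frac{\rd}{\rd x}(\langle f_0\rangle\langle g\rangle)$ in the last step actually goes a bit further than the paper, which simply asserts the limit once the $\mathcal{O}(\Kn)$ prefactor is exhibited.
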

\begin{proof}
Notice that
\[
\begin{cases}
g&=\frac{v}{\Kn \sigma_a}\partial_x g+\frac{\sigma_{s0}}{\Kn^2\sigma_a}\mathcal{L}g\\
f_0&=-\frac{v}{\Kn \sigma_a} \partial_x f_0+\frac{\sigma_{s0}}{ \Kn^2 \sigma_a}\mathcal{L}f_0\\
\end{cases}
\]
we can derive that
\[
\begin{aligned}
\frac{\rd}{\rd x}\langle f_0 g\rangle&=\langle g\partial_xf_0\rangle+\langle f_0\partial_x g\rangle\\
&=\langle \partial_xf_0\frac{v}{\Kn\sigma_a}\partial_xg\rangle +\langle \partial_xf_0\rangle \frac{\sigma_{s0}}{\Kn^2 \sigma_a}\langle g\rangle -\langle g\partial_xf_0\rangle \frac{\sigma_{s0}}{\Kn^2\sigma_a}\\
&\quad +\langle -\partial_xf_0\frac{v}{\Kn\sigma_a}\partial_xg\rangle +\langle f_0\rangle \frac{\sigma_{s0}}{\Kn^2 \sigma_a}\langle \partial_x g\rangle -\langle f_0\partial_xg\rangle \frac{\sigma_{s0}}{\Kn^2 \sigma_a}\\
&=\frac{\sigma_{s0}}{\Kn^2\sigma_a}\frac{\rd}{\rd x}(\langle f_0\rangle \langle g\rangle)-\frac{\sigma_{s0}}{\Kn^2\sigma_a}\frac{\rd}{\rd x}\langle f_0g\rangle\\
\end{aligned}
\]
this is equivalent to
\[
\frac{\rd}{\rd x}\langle f_0 g\rangle =\frac{\sigma_{s0}/\sigma_a}{\Kn^2+\sigma_{s0}/\sigma_a}\frac{\rd}{\rd x}(\langle f_0 \rangle\langle g\rangle)\,,
\]
therefore we have
\[
\frac{\rd}{\rd x}\gamma_\Kn =\frac{\Kn}{\Kn^2+\sigma_{s0}/\sigma_a}\frac{\rd}{\rd x}(\langle f_0\rangle \langle g\rangle) \rightarrow 0 \qquad \text{ as } ~\Kn \rightarrow 0\,.
\]
\end{proof}

This proposition indicates that as $\Kn\to 0$, whatever boundary condition we provide for $f_0$ and $g$, $\gamma_\Kn$ is going to be flat, and thus not able to distinguish the variation in $\sigma_s$, making the problem more and more non-injective.

\begin{theorem}\label{thm:RTE_sca2}
When Kundsen number $\Kn \rightarrow 0$, solving linear system $\langle\gamma_\Kn,\tilde{\sigma}_s\rangle_{L^2(\rd{x})}=b$ is non-injective. More specifically, in the zero limit of $\Kn$, the space spaned by the kernel $\gamma_\Kn$ has finite rank, impossible to reflect full information of $\tilde{\sigma}_s$.
\end{theorem}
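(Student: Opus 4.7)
The plan is to build directly on Proposition~\ref{prop:RTE_sca2}, which already supplies the essential identity
\[
\frac{\rd}{\rd x}\gamma_\Kn(x;\delta_y,\phi) = \frac{\Kn}{\Kn^2 + \sigma_{s0}/\sigma_a}\frac{\rd}{\rd x}\bigl(\langle f_0\rangle \langle g\rangle\bigr),
\]
and combine it with a one-step integration over the bounded interval $(0,1)$. By the diffusion-limit analysis of Theorem~\ref{thm:diffusion}, $\langle f_0\rangle$ and $\langle g\rangle$ stay uniformly bounded and converge to $\rho_f,\rho_g$, both solving elliptic equations with uniformly controlled spatial derivatives. Consequently $\frac{\rd}{\rd x}(\langle f_0\rangle\langle g\rangle) = \mathcal{O}(1)$, while the prefactor $\Kn/(\Kn^2+\sigma_{s0}/\sigma_a)$ is $\mathcal{O}(\Kn)$ as $\Kn\to 0$. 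Integrating from $0$ to $x$ on the compact interval then yields
\[
\gamma_\Kn(x;\delta_y,\phi) = c(\delta_y,\phi) + \mathcal{O}(\Kn), \qquad x\in(0,1),
\]
where the constant $c(\delta_y,\phi)$ is independent of $x$.

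With this asymptotic shape in hand, the linear equation $\langle\gamma_\Kn,\tilde{\sigma}_s\rangle_{L^2(\rd x)} = b(\delta_y,\phi)$ reduces, in the limit, to
\[
c(\delta_y,\phi)\int_0^1 \tilde{\sigma}_s(x)\rd x + \mathcal{O}(\Kn)\|\tilde{\sigma}_s\|_{L^1} = b(\delta_y,\phi).
\]
The only piece of $\tilde{\sigma}_s$ that leaves a trace in the measurement is its integral $\int\tilde{\sigma}_s\rd x$, and this fact is independent of the boundary data $\phi$ and the measurement location $y$. Non-injectivity is immediate: every mean-zero perturbation of $\tilde{\sigma}_s$ lies in the limiting kernel, an infinite-dimensional subspace of $L^2(0,1)$. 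For the finite-rank assertion, the family $\{\gamma_\Kn(\cdot;\delta_y,\phi)\}_{y,\phi}$, viewed as a subset of $L^2(0,1)$, collapses as $\Kn\to 0$ to scalar multiples of the constant function $\mathbf{1}$; its linear span is therefore at most one-dimensional, which pins the asymptotic rank.

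The main obstacle I anticipate is ensuring that the constants hidden in the $\mathcal{O}(\cdot)$ symbols can be taken uniformly over admissible $(y,\phi)$. Since $\phi$ and $\delta_y$ enter the limit only through the Dirichlet data $\xi_f,\xi_g$ of $\rho_f,\rho_g$, and $\|\phi\|_{L^\infty}\le 1$ with $\delta_y$ a fixed hat function, standard elliptic regularity furnishes uniform $W^{1,\infty}$ bounds on $\rho_f,\rho_g$, which in turn makes the $\mathcal{O}(1)$ estimate on $\frac{\rd}{\rd x}(\langle f_0\rangle\langle g\rangle)$ uniform in $(y,\phi)$. Notably, in contrast to the analysis for $\sigma_a$, no boundary-layer/interior splitting is required here: Proposition~\ref{prop:RTE_sca2} was derived pointwise from the transport equations and so the key identity holds throughout $(0,1)$, which keeps the argument substantially cleaner than its Theorem~\ref{thm:Dis} counterpart.
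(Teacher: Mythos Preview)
Your argument is correct and suffices for the theorem as stated, but it takes a more elementary route than the paper. Both proofs start from the identity in Proposition~\ref{prop:RTE_sca2}. You then simply observe that the prefactor $\Kn/(\Kn^2+\sigma_{s0}/\sigma_a)$ is $\mathcal{O}(\Kn)$, integrate once, and conclude that $\gamma_\Kn$ collapses to a constant in $x$, so the limiting span is one-dimensional. The paper instead goes one step further and analyzes the \emph{structure} of the $\mathcal{O}(\Kn)$ correction: passing to the diffusion limit, it writes $\langle f_0\rangle\to\rho_f$ and $\langle g\rangle\to\rho_g$ as linear combinations of the two 1D Green's functions $G_1,G_2$, so that $\rho_f\rho_g\in\operatorname{span}\{G_1^2,G_1G_2,G_2^2\}$, and hence $\frac{\rd}{\rd x}\gamma_\Kn$ itself lies in a three-dimensional space. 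Your approach is cleaner and is all that the theorem statement requires; the paper's decomposition buys additional information about the rank of the next-order term, which is what underlies the ``three dominant singular values'' picture in the numerical section.

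One small caveat: your claim that ``no boundary-layer/interior splitting is required'' is correct for the identity itself, but your uniform $\mathcal{O}(1)$ bound on $\frac{\rd}{\rd x}(\langle f_0\rangle\langle g\rangle)$ need not hold pointwise in the layer, where derivatives of the velocity averages can be $\mathcal{O}(1/\Kn)$. This does not break your conclusion---after multiplying by the $\mathcal{O}(\Kn)$ prefactor and integrating over the $\mathcal{O}(\Kn)$-wide layer the contribution is still $\mathcal{O}(\Kn)$---but you should state the estimate as a bound on the total variation of $\langle f_0\rangle\langle g\rangle$ (which is $\mathcal{O}(1)$ by the maximum principle) rather than as a uniform pointwise derivative bound.
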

\begin{proof}
The non-injectivity of the linear system directly follows from Proposition~\ref{prop:RTE_sca2}. Moreover, we show in below that the space that $\gamma$ resides in is of low rank in the zero limit of $\Kn$. Let the domain of $x$ be $x_l \leq x \leq x_r$ and denote
\[
\langle f_0\rangle:= \rho_f, \quad \langle g \rangle := \rho_g\,.
\] 
As Knudsen number $\Kn$ goes to zero, $ \rho_f$ and $ \rho_g$ solves the diffusion equation in the leading order:
\[
\begin{aligned}
\frac{\rd}{\rd x} \left(\frac{1}{\sigma_{s0}}\frac{d}{dx}\rho_f \right)=\sigma_a \rho_f \,,   \quad
\frac{\rd}{\rd x} \left(\frac{1}{\sigma_{s0}}\frac{d}{dx} \rho_g \right)= \sigma_a \rho_g \,,
\end{aligned}
\]
with velocity averaged boundary data $\xi_f$ and $\xi_g$ at only two points: left and right end points. That is,
\[
\rho_f(x_l) = \xi_{f_l}, ~ \rho_f(x_r) = \xi_{f_r};  \qquad \rho_g(x_l) = \xi_{g_l}, ~ \rho_f(x_r) = \xi_{g_r}\,.
\]
In 1D, there are two Green's functions:
 \[
 \frac{\partial}{\partial y} \left(\frac{1}{\sigma_{s0}}\frac{\partial }{\partial y}G_1 \right)=\sigma_a G_1\,, \quad G_1(x,y=x_l) = 1\,,\quad G_1(x,y=x_r) = 0\,,
 \]
 and
 \[
 \frac{\partial}{\partial y} \left(\frac{1}{\sigma_{s0}}\frac{\partial }{\partial y}G_2 \right)=\sigma_a G_2\,, \quad G_2(x,y=x_l) = 0\,,\quad G_2(x,y=x_r) = 1\,.
 \]
Then $\rho_f$ and $\rho_g$ can be written as
\[
\rho_f = \xi_{f_l} G_1(x) + \xi_{f_r} G_2(x); \quad \rho_g = \xi_{g_l} G_1(x) + \xi_{g_r} G_2(x)\,.
\]
As a result, $\rho_f(x)\rho_g(x) \in \text{span}\{G_1(x)^2, G_1(x)G_2(x), G_2(x)^2\}$, which means that the function space of $\rho_f \rho_g$ is of low rank (rank 3). Combining with Proposition~\ref{prop:RTE_sca2}, we see that:
\begin{equation*}
\frac{\rd}{\rd x}\gamma_\Kn = \frac{\Kn}{\Kn^2+\sigma_{s0}/\sigma_a}\frac{\rd}{\rd x}\left(\rho_f\rho_g\right)\,,
\end{equation*}
meaning $\frac{\rd}{\rd x}\gamma_\Kn$ is low rank as well. Therefore it is impossible to recover $\tilde{\sigma}_s$ from the linear mapping $\langle \gamma_\Kn,\cdot \rangle_{L^2(\rd{x})}$.
\end{proof}
\begin{remark}
Theorem \ref{thm:RTE_sca2} coincides with our intuition in Section 1 since when Knudsen number is small, the scattering will dominate absorption as neutron travels through the medium, and the case described here converges to the critical case.
\end{remark}

\subsection{Ill-conditioning in higher dimensions}
In higher dimension, we show that the inverse problem become more and more singular as the Knudsen number approaches zero. 

Recall the linear mapping in this scenario:
\begin{equation*}
\average{\gamma_{\Kn}, \tilde{\sigma}_s} =  b(\delta_y, \phi_d), \qquad \text{with} \quad 
\gamma_{\Kn} = \frac{1}{\Kn}\int \opL f_0 g \rd{v}\,, 
\end{equation*}
where $f_0$ and $g$ solve \eqref{eqn:RTE_sca_0} \eqref{eqn:RTE_sca_g}, respectively. We then investigate the sensitivity of reconstructing $\tilde{\sigma}_s$. The main theorem states as follows. 
\begin{theorem}\label{thm:Dis-crit}
Assume that the map from $\tilde{\sigma}_s$ to $b$ is injective. Given an error tolerance $\delta$ on the measurement, the distinguishability coefficient in reconstructing $\tilde{\sigma}_s$ grows as $\delta$ grows and $\Kn$ shrinks. Namely, there exists a constant $C$ such that
\begin{equation}\label{eqn:Dis}
\kappa_s : = \sup_{\sigma_s \in \Gamma_\delta} \frac{\| \sigma_s - \tilde{\sigma}_s \|_{L^\infty(\rd{x})}}{ \| \tilde{\sigma}_s \|_{L^\infty(\rd{x})}} \geq C\frac{\delta}{\Kn} ,\quad \text{when} \quad \Kn\ll 1\,,
\end{equation}
where 
\[
\Gamma_\delta =\{ \sigma: \sup_{\substack{\forall \|\phi\|_{L^\infty(\Gamma_-)}\leq 1,\\ \forall y\in \partial\Omega}} |\langle\gamma_\Kn\,,\sigma\rangle_{L^2(\rd{x})} - b(\delta_y,\phi_d)|\leq \delta  \} \,.
\]
\end{theorem}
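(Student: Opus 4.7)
The plan is to mirror the constructive strategy of the proof of Theorem~\ref{thm:Dis}: exhibit a perturbation $c(x)$ supported strictly inside $\Omega_I$ such that $\sigma_s := \tilde{\sigma}_s + c \in \Gamma_\delta$ and $\|c\|_{L^\infty(\rd x)}$ is of order $\delta/\Kn$. Since the background $\|\tilde{\sigma}_s\|_{L^\infty(\rd x)} = \mathcal{O}(1)$, the ratio $\|c\|_{L^\infty}/\|\tilde{\sigma}_s\|_{L^\infty}$ will then deliver the claimed lower bound on $\kappa_s$. Using the identity $\langle \gamma_\Kn,\tilde{\sigma}_s\rangle = b(\delta_y,\phi)$, membership in $\Gamma_\delta$ is equivalent to the uniform bound $\sup_{\phi,y}|\langle \gamma_\Kn(\cdot;\delta_y,\phi),c\rangle_{L^2(\rd x)}|\leq \delta$.

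The central step is a diffusion asymptotic analysis of $\gamma_\Kn$ that exploits a structural difference from the absorption case. Applying the Hilbert-type expansion from Theorem~\ref{thm:diffusion} in $\Omega_I$ gives $f_0 = \rho_f - \tfrac{\Kn}{\sigma_{s0}} v\cdot \nabla \rho_f + \mathcal{O}(\Kn^2)$ and $g = \rho_g + \tfrac{\Kn}{\sigma_{s0}} v\cdot \nabla \rho_g + \mathcal{O}(\Kn^2)$, where $\rho_f,\rho_g$ solve diffusion equations analogous to \eqref{eqn:rho-f-0}. Crucially, $\mathcal{L}\rho_f = 0$ since $\rho_f\in\NullL$, so the zeroth-order term in $\mathcal{L}f_0$ disappears:
\[
\mathcal{L}f_0 = \frac{\Kn}{\sigma_{s0}}\,v\cdot \nabla \rho_f + \mathcal{O}(\Kn^2).
\]
Multiplying by $g$ and integrating in $v$, the $\mathcal{O}(\Kn)$ contribution vanishes thanks to $\langle v\rangle = 0$ and $\int \mathcal{L}f_2\,\rd v = 0$, leaving
\[
\int g\,\mathcal{L}f_0\, \rd v = \frac{C\Kn^2}{\sigma_{s0}^2}\,\nabla \rho_f \cdot \nabla \rho_g + \mathcal{O}(\Kn^3),\qquad x\in\Omega_I.
\]
Hence $\gamma_\Kn\big|_{\Omega_I} = \tfrac{1}{\Kn}\int g\,\mathcal{L}f_0\,\rd v = \mathcal{O}(\Kn)$, and this bound is uniform over $\|\phi\|_{L^\infty(\Gamma_-)}\leq 1$ and $y\in\partial\Omega$ by the maximum principle and standard elliptic regularity for $\rho_f,\rho_g$.

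Choosing $c(x)$ supported in $\Omega_I$ eliminates the boundary-layer contribution to $\langle \gamma_\Kn,c\rangle$, yielding the elementary estimate
\[
\sup_{\phi,y}\bigl|\langle \gamma_\Kn,c\rangle_{L^2(\rd x)}\bigr| \leq |\Omega_I|\,\sup_{\phi,y}\|\gamma_\Kn\|_{L^\infty(\Omega_I)}\,\|c\|_{L^\infty(\rd x)} \leq C_1\Kn\,\|c\|_{L^\infty(\rd x)}.
\]
Picking $\|c\|_{L^\infty(\rd x)} = \delta/(C_1\Kn)$ guarantees $\sigma_s = \tilde{\sigma}_s + c \in \Gamma_\delta$, and dividing by $\|\tilde{\sigma}_s\|_{L^\infty} = \mathcal{O}(1)$ produces $\kappa_s \geq C\delta/\Kn$.

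The main technical obstacle is justifying the $\mathcal{O}(\Kn)$ estimate on $\gamma_\Kn$ uniformly in the Delta-function boundary data $\delta_y$; as in Section 3.3, this is naturally handled by interpreting $\delta_y$ as a mollified hat function, so the corresponding $\rho_g$ and its gradient stay uniformly bounded with constants independent of $\Kn$. A secondary remark: one could potentially sharpen the conclusion to $\kappa_s \geq C\delta/\Kn^2$ by orthogonalizing $c$ against the finite-dimensional span of $\{\nabla\partial_n G(\cdot,y_i)\cdot \nabla\partial_n G(\cdot,y_j)\}_{i,j}$, in the spirit of~\eqref{eqn:orthogonal1}; however, the weaker bound $\delta/\Kn$ stated in the theorem follows already from the direct estimate above and avoids that extra work.
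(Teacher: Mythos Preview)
Your proposal is correct and follows essentially the same approach as the paper: construct a perturbation $c$ supported in $\Omega_I$, show $(\gamma_\Kn)_I=\mathcal{O}(\Kn)$ via the interior diffusion expansion, and conclude $\|c\|_{L^\infty}\sim\delta/\Kn$ suffices for $\tilde\sigma_s+c\in\Gamma_\delta$. The only cosmetic difference is that the paper reaches the leading term $-\tfrac{\Kn}{\sigma_{s0}^2}C\,\nabla\rho_f\cdot\nabla\rho_g$ by rewriting $\int g\,\mathcal{L}f_0\,\rd v=\langle f_0\rangle\langle g\rangle-\langle f_0g\rangle$ and expanding each factor, whereas you expand $\mathcal{L}f_0$ directly and use $\mathcal{L}\rho_f=0$; both computations are equivalent, and your closing remark about the potential $\delta/\Kn^2$ sharpening via orthogonalization matches the paper's Remark~\ref{remark:0203}.
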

\begin{proof}
When $\Kn$ is small, we decompose $f_0$ and $g$ into a layer part that accounts for the boundary layer supported in the vicinity of the boundary $\partial \Omega$ with $\mathcal{O}(\Kn)$ width and an interior part:
\begin{equation*}
f_0 = f_L + f_I, \quad g = g_L + g_I\,.
\end{equation*}

Decompose also $\gamma_\Kn$ as $\gamma_\Kn = \left(\gamma_\Kn \right)_L + \left( \gamma_\Kn\right)_I$, then we examine the interior part $(\gamma_\Kn)_I$ by means of $f_I$ and $g_I$. Upon asymptotic expansion, the interior solutions $f_I$ and $g_I$ are approximated as follows
\begin{eqnarray*} 
f_I &=& \rho_f - \frac{\Kn}{\sigma_{s0}} v \cdot \nabla_x \rho_f + \Kn^2f_2 \,,
\\ g_I &=& \rho_g  + \frac{\Kn}{\sigma_{s0}} v \cdot \nabla_x \rho_g + \Kn^2g_2\,,
\end{eqnarray*}
hence, 
\begin{eqnarray} \label{eqn:0223}
(\gamma_\Kn)_I &=& \frac{1}{\Kn} ( \average{f_I} \average{g_I} - \average{f_I g_I}) \nonumber
\\ &=& \frac{1}{\Kn} \left\{  (\rho_f-\Kn^2 \average{f_2}) (\rho_g + \Kn^2 \average{g_2} )   \right.  \nonumber
\\ && \qquad  \left. - \left\langle    \left( \rho_f - \frac{\Kn}{\sigma_{s0}} v\cdot \nabla_x \rho_f + \Kn^2f_2\right) 
\left(\rho_g - \frac{\Kn}{\sigma_{s0}} v\cdot \nabla_x \rho_g + \Kn^2g_2 \right) \right\rangle   \right\} \nonumber
\\ &=& - \frac{\Kn}{\sigma_{s0}^2} \average{  (v\cdot \nabla_x \rho_f ) (v \cdot \nabla_x \rho_g)}  + \mathcal{O}(\Kn^2) \nonumber
\\ & = & - \frac{\Kn}{\sigma_{s0}^2}  C \nabla_x \rho_f \cdot \nabla_x \rho_g  + \mathcal{O}(\Kn^2)\,,
\end{eqnarray}
where $C$ is a constant depending on the dimension of the problem. Now consider $c(x)$ such that it vanishes in the boundary layer and that
\begin{equation*}
|\average{\gamma_\Kn, c}_{L^2(\rd{x})}| \leq \delta\,,
\end{equation*}
then one certainly has
\[
\sigma_s(x) = c(x) +\tilde{\sigma}_s(x)\in\Gamma_\delta\,.
\]
Since $\average{\gamma_\Kn, c}_{L^2(\rd{x})} = \average{(\gamma_\Kn)_I, c_I} _{L^2(\rd{x})} = \average{ - \frac{\Kn}{\sigma_{s0}^2}  C \nabla_x \rho_f \cdot \nabla_x \rho_g  , c_I}_{L^2(\rd{x})} + \mathcal{O}(\Kn^2)$, 
we immediately have
\begin{equation*}
\|c\|_{L^\infty(\rd{x})} \sim \mathcal{O}\left( \frac{\delta }{\Kn} \right)\,,
\end{equation*}
which implies that

\[
\kappa_s \geq  C\frac{\delta }{\Kn} \,,\quad \text{for some constant}\quad C\,.
\]

\end{proof}

\begin{remark}\label{remark:0203}
Notice that in the expression \eqref{eqn:0223}, the leading term in $\gamma$ has the structure of $\nabla_x \rho_f \cdot \nabla_x \rho_g $, where $\rho_f$ and $\rho_g$ solve the diffusion equation with corresponding boundary condition, which has a Green's function formation. Therefore, one can show that it is asymptotically ``low rank" in the sense that one can find a nonzero $c(x)$ such that 
\[
\int \nabla_x \rho_f \cdot \nabla_x \rho_g c \rd{x} = \mathcal{O}(\Kn)\,.
\]
Details follow the argument in the proof of Theorem \ref{thm:Dis}\,.
\end{remark}

\section{Numerical test}
In this section, we conduct a few numerical experiments in 1D to check the conditioning of the inverse problem, and show that it degrades when $\Kn$ goes to zero, as indicated by the theory above. More specifically, we check the variation of $\gamma(x; \delta_y,\phi)$ in $x$ and examine the singular values of the matrix $\Amat$, whose element takes values $\Amat_{pi} = \gamma(x_i; \delta_{y_k},\phi_d)$ with $(k,d)=p$. 

Recall the definition of $\gamma(x; \delta_{y_k},\phi_d)$ in \eqref{eqn:gamma00} and \eqref{eqn:gamma-sca-crit}, one needs to solve the forward problems for $f_0$ and $g$ with boundary conditions $\phi_d$, and $\delta_y$, respectively. The forward solver we adopt is the fully implicit solver \cite{LW17} with Generalized minimal residual method (GMRES) under a tolerance of $10^{-10}$. For all the examples below, the spatial domain is chosen as $\Omega=[0,1]$ and discretized with $N_x=200$ uniformly distributed nodes. We also sample $N_v = 80$ grid points in the velocity domain $\mathcal{S}=[-1,1]$. Note that for the $\Kn$ we considered here, the spatial mesh is fine enough to resolve the boundary layers. 

Now we need to decide the boundary condition $f_0|_{\Gamma_-} = \phi_d(x,v)$ and $g|_{\Gamma_-} = \delta_y(x)$ such that we extract the most information from $f_0$ and $g$. Since $f_0$ solves a linear equation, we can set delta functions as its inflow data, i.e.,
\[
\phi_d(0,v) = \delta(v-v_d), \quad  v_d >0, \quad  d = 1, \cdots, 40\,;  \qquad
\phi_d(1,v) = \delta(v-v_d), \quad  v_d <0, \quad  d = 41, \cdots, 80\,.
\]
The boundary condition for $g$ is easier to set up. Note that in 1D, there are only two boundary points, $x=0$ and $x=1$, therefore, the boundary for $g$ reads
\[
\delta_{y_1} = \delta(x-0), \quad \delta_{y_2} = \delta(x-1)\,.
\]
Then the associated matrix is of size $\Amat\in \mathbb{R}^{160\times 200}$. 

\subsection{Recover Absorption Coefficient}
The first test addresses the problem of recovering $\sigma_a$. Here the background absorption is set to be $\sigma_{s}=1+\frac{1}{1.5+\sin(2\pi x)}$ and the scattering coefficient takes the form $\sigma_{a0} = 4+\frac{1}{2}\sin(4\pi x)$. Here 
\begin{equation} \label{Api}
\Amat_{pi} = \Kn \int g(x_i,v; \delta_{y_k}) f_0(x,v; \phi_d) \rd{v}, \quad p = (d,k)\,.
\end{equation}
As predicted in Theorem~\ref{theorem:abs}, in 1D, the matrix $\Amat$ is approximately low rank with rank 3, which indicates that its singular value decays to zero quickly. This is demonstrated in Fig.~\ref{fig:sin_decay_int_abs}, wherein we plot the singular values for a variety of $\Kn$. It is easy to see that only three singular values stand out, coincide with the rank 3 argument. And the difference between the first three singular values with the rest are more pronounced with smaller $\Kn$. 
\begin{figure}[htp]
\centering
  \includegraphics[width=.5\linewidth]{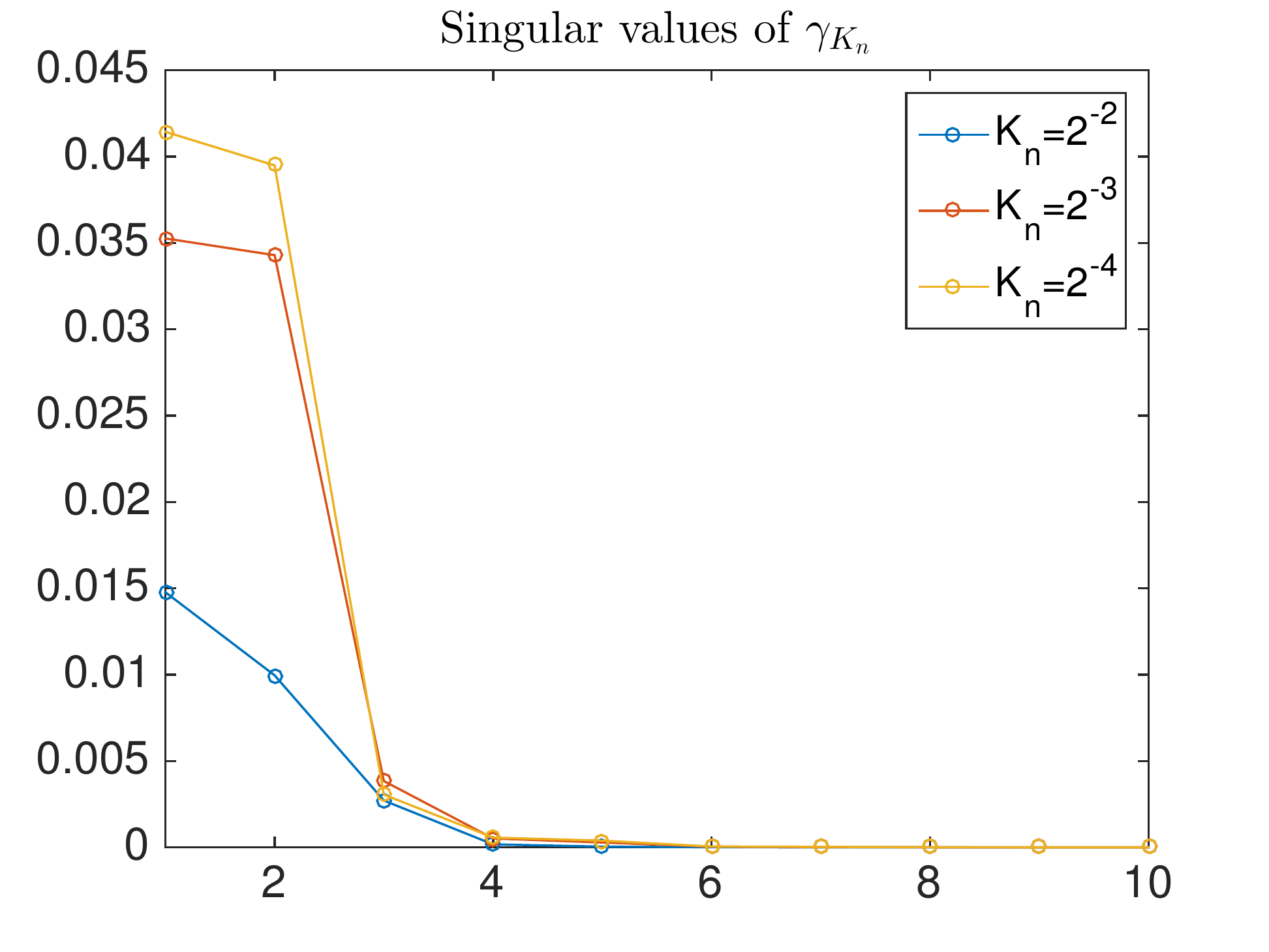}
  \caption{Plots of the singular values for $\gamma_\Kn$ in \eqref{eqn:gamma00} (discrete form is \eqref{Api}) when $\Kn=2^{-2},2^{-3}$ and $2^{-4}$.}
  \label{fig:sin_decay_int_abs}
\end{figure}

Moreover, we see in the proof of Theorem~\ref{thm:Dis} (see equation \eqref{eqn:207}) that $\gamma_\Kn \approx \Kn \rho_f \rho_g$ for small $\Kn$, which lives in a space spanned by $G^2_1$, $G^2_2$ and $G_1G_2$ with $G_{1,2}$ standing for the Green's functions. The plots in~\ref{fig:vectors_int_abs} show the first three eigenvectors of $\Amat$ as $\Kn\to0$.
\begin{figure}[htp]
\centering
  \includegraphics[width=.3\linewidth]{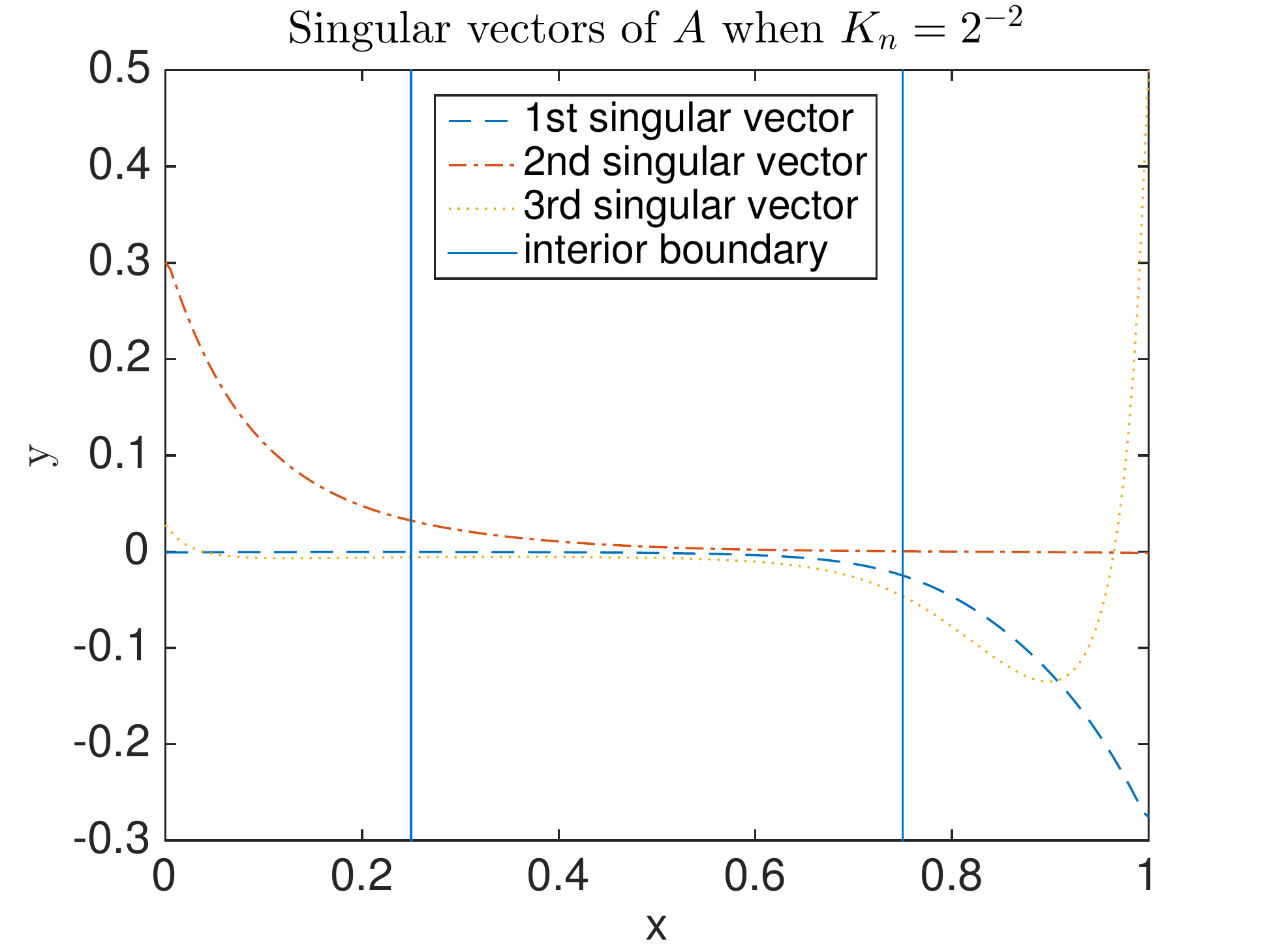}
  \includegraphics[width=.3\linewidth]{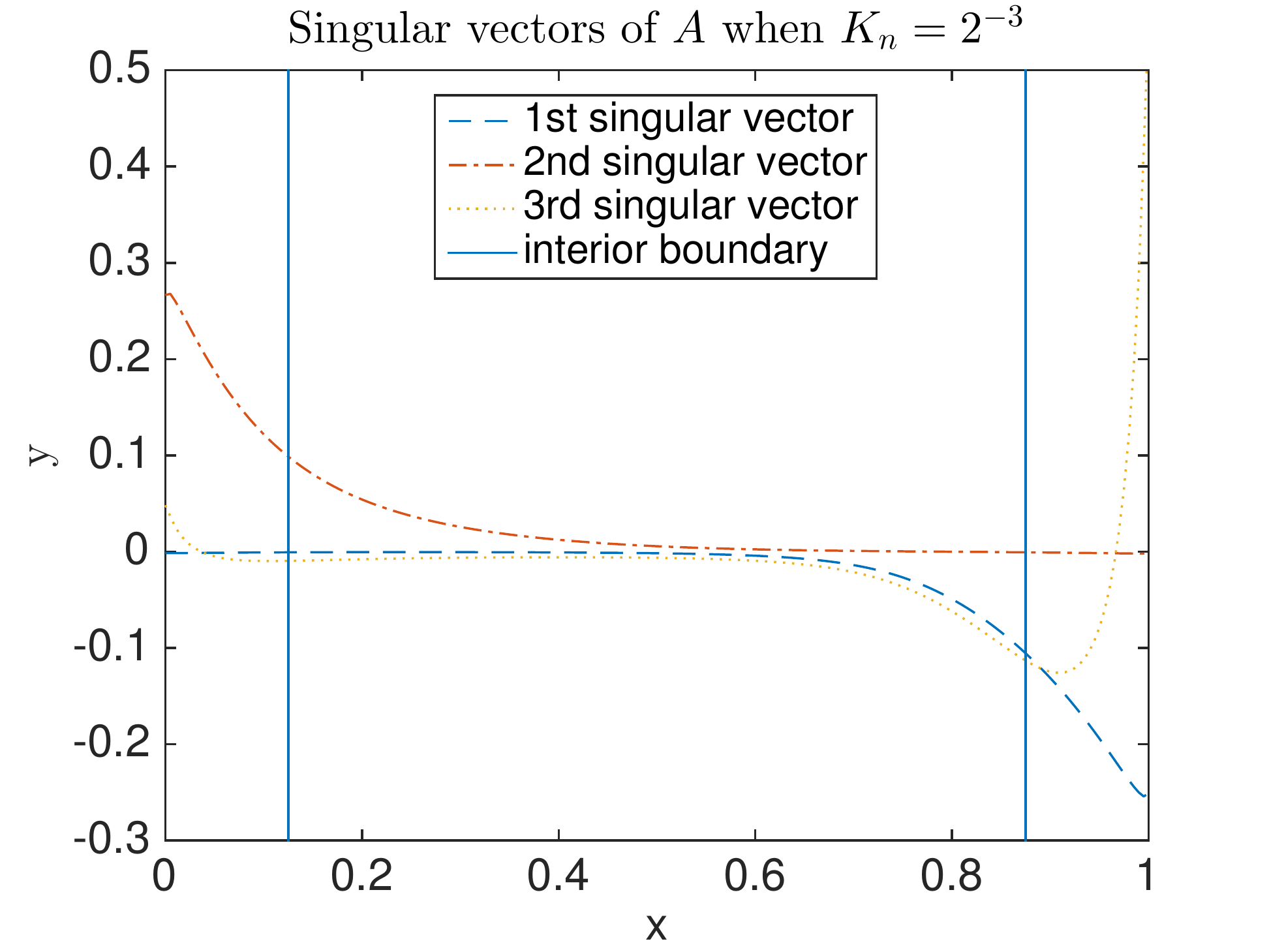}
  \includegraphics[width=.3\linewidth]{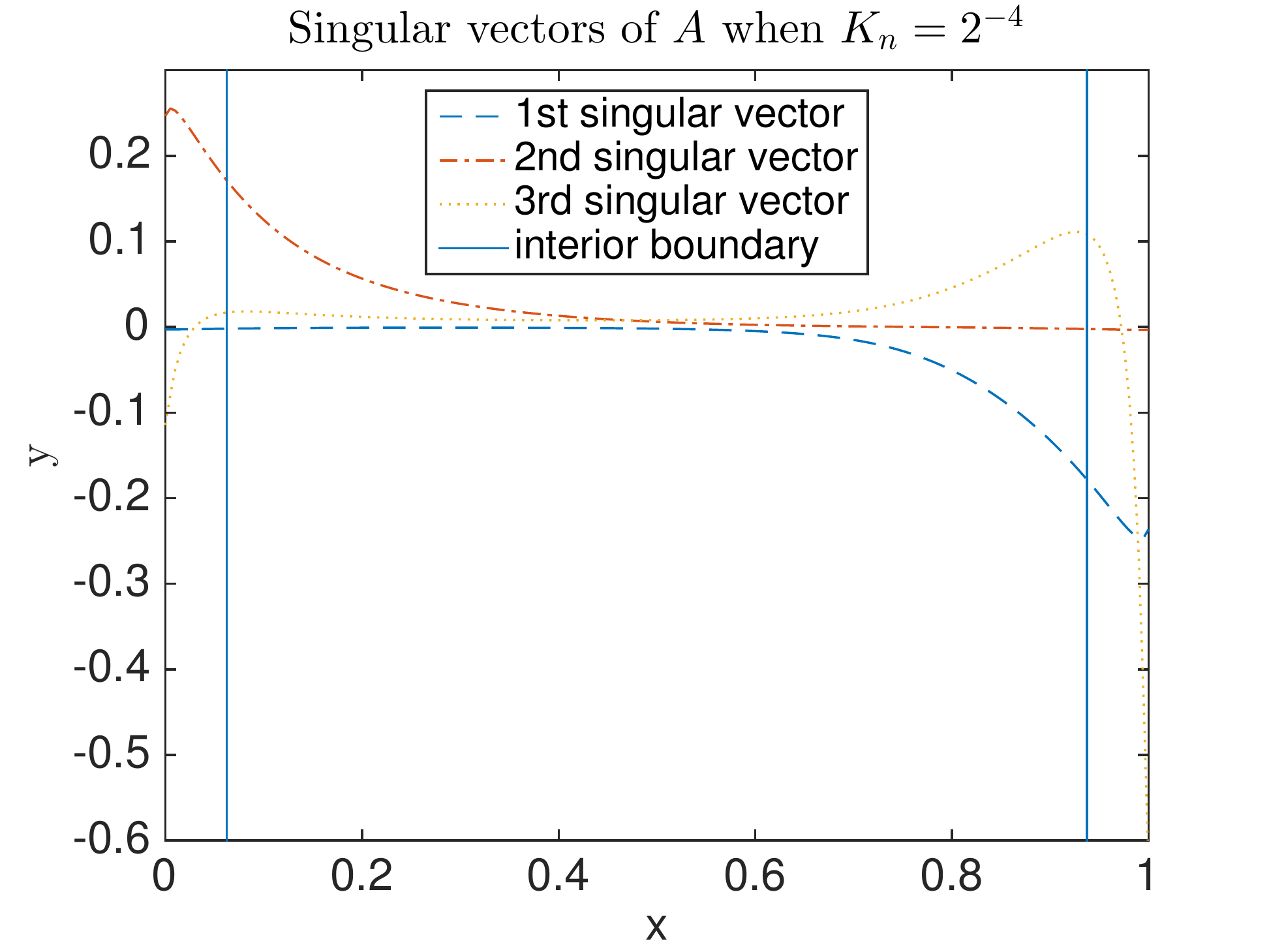}
  \caption{Plots of the singular vectors for $\Amat$ when $\Kn=2^{-2},2^{-3}$ and $2^{-4}$. It can be seen that the first two eigenvectors are almost symmetric to each other. }
  \label{fig:vectors_int_abs}
\end{figure}

\subsection{Recover Scattering Coefficient}
In the second test, we aim to recover $\sigma_s$. Here 
\[
\Amat_{pi}=\frac{1}{\Kn} \int g(x_i,v, \delta_{y_k})\mathcal{L}f_0(x_i,v; \phi_d)\rd v, \quad p = (k,d)\,.
\]
We first compute $\gamma_\Kn$ (stored in $\Amat$) with $\sigma_{s0}=1+\frac{1}{1.5+\sin(2\pi x)}$ and $\sigma_{a}=0$. It is seen from Proposition \ref{thm:RTE_sca} that $\gamma_\Kn$ stays unchanged in $x$ direction, i.e., $\frac{\rd\gamma_\Kn}{\rd x} = 0$. To show this, we plot the matrix $\Amat$ in Fig.~\ref{fig:non-absorb} and see that the entire matrix is roughly flat in $x$ direction.
\begin{figure}[htp]
\centering
  \includegraphics[width=.5\linewidth]{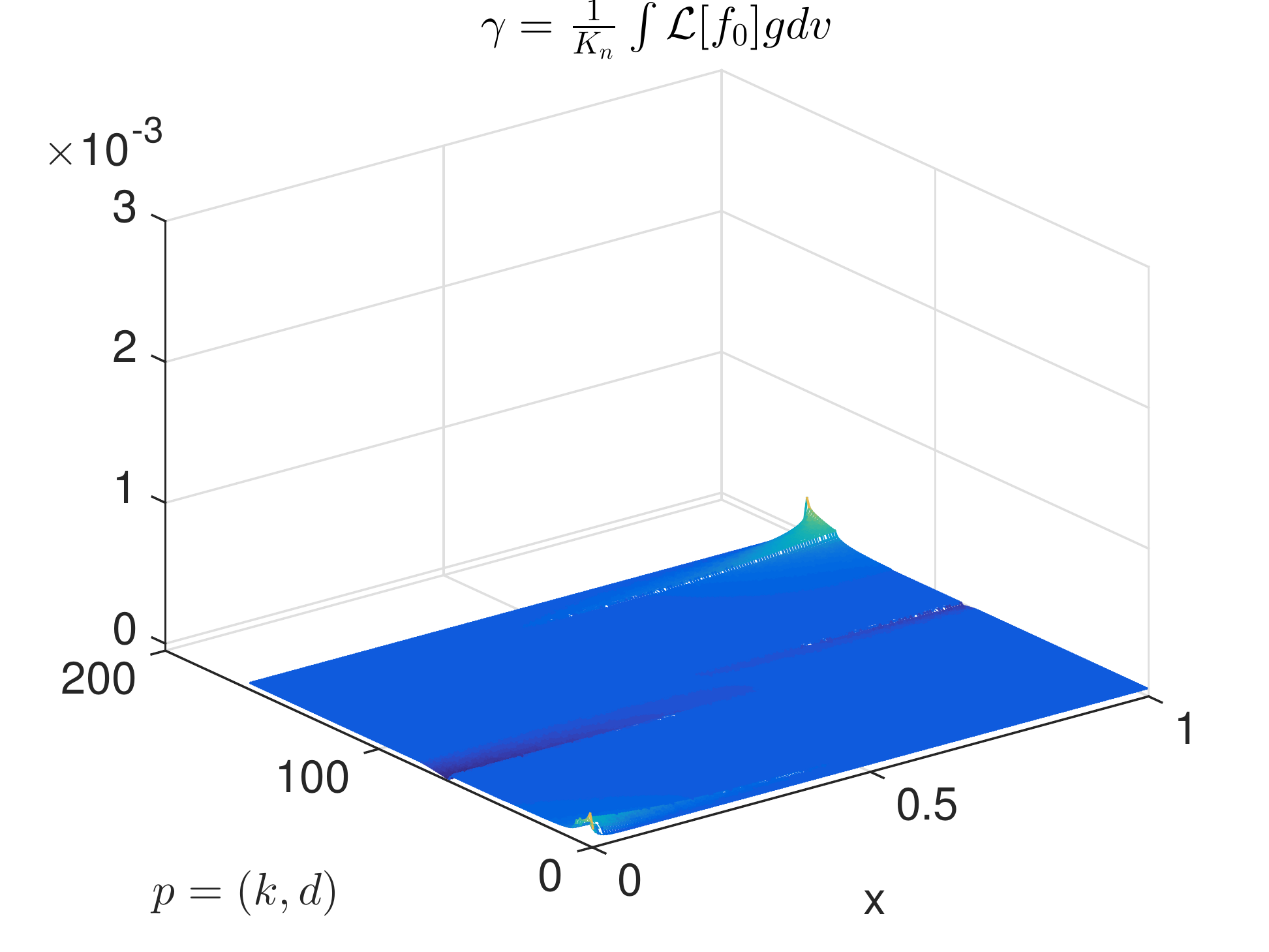}
  \caption{To recover the scattering coefficient when $\sigma_a=0$, the problem is expected to be ill-posed with $\gamma_\Kn$ being flat in $x$. We here plot $\gamma_\Kn = \frac{1}{\Kn}\int \mathcal{L}[f_0]g\rd{v}$ with $\Kn = 1$, for various of $\phi_d$ and $\delta_y$ as the boundary condition for $g$ and $f_0$.}
  \label{fig:non-absorb}
\end{figure}

Next we consider the case when $\sigma_a$ is nontrivial: $\sigma_{a}=2^{-4}(4+\frac{1}{2}\sin(4\pi x))$, and $\sigma_{s0}=\frac{1}{2^{-4}}(1+\frac{1}{1.5+\sin(2\pi x)})$. As predicted in Proposition \ref{prop:RTE_sca2}, $\frac{\rd\gamma_\Kn}{\rd x} \sim \Kn\frac{\rd\rho_f\rho_g}{\rd x}\to 0$ as $\Kn\to 0$, and this is demonstrated in Fig.~\ref{fig:dgamma_ratio}, where we plot $\frac{\rd\gamma_\Kn/\rd x}{\rd\rho_f\rho_g/\rd x}$. As $\Kn$ decreases by $1/2$, the ratio decrease by $1/2$ as well.

\begin{figure}[htp]
\centering
  \includegraphics[width=.5\linewidth]{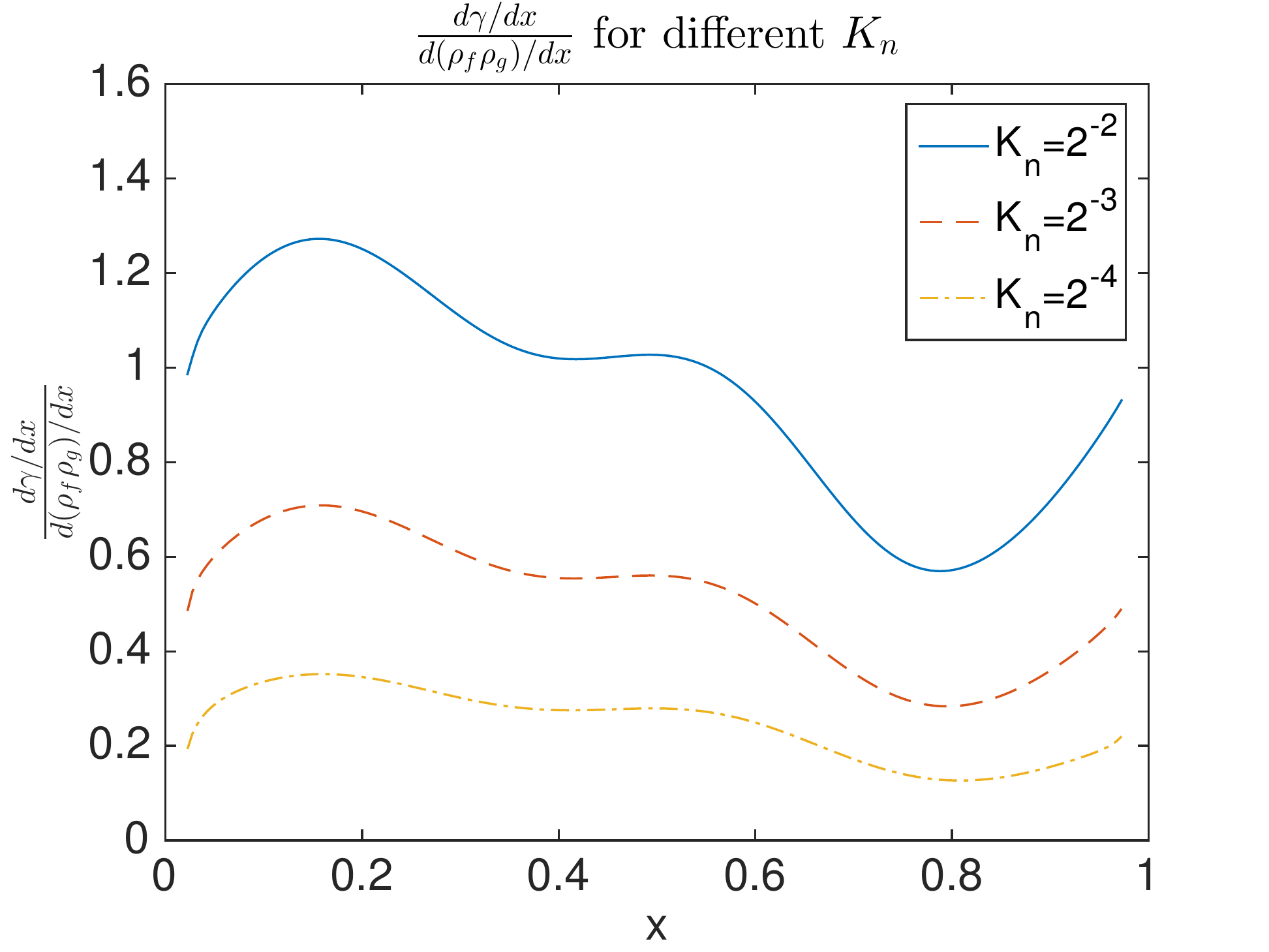}
  \caption{$\frac{\rd\gamma/\rd x}{\rd(\rho_f\rho_g)/\rd x}$. The plot shows that the ratio is indeed at the order $\Kn$ in the zero limit of $\Kn$, as predicted.}
  \label{fig:dgamma_ratio}
\end{figure}

In the end we test the singular value decay of $\Amat_I$, the interior part of $\Amat$. As $\Kn \rightarrow 0$, we expect that its element $\gamma_\Kn \approx \Kn \nabla_x\rho_f\cdot\nabla_x\rho_g$ (see equation \eqref{eqn:0223}), and thus $\Amat_I$ is a low rank matrix. We plot the singular values Fig.~\ref{fig:sin_decay_int_sca}, and as before, only the first three singular values dominates, and the rest vanishes at the order of $\Kn$. 
\begin{figure}[htp]
\centering
  \includegraphics[width=.5\linewidth]{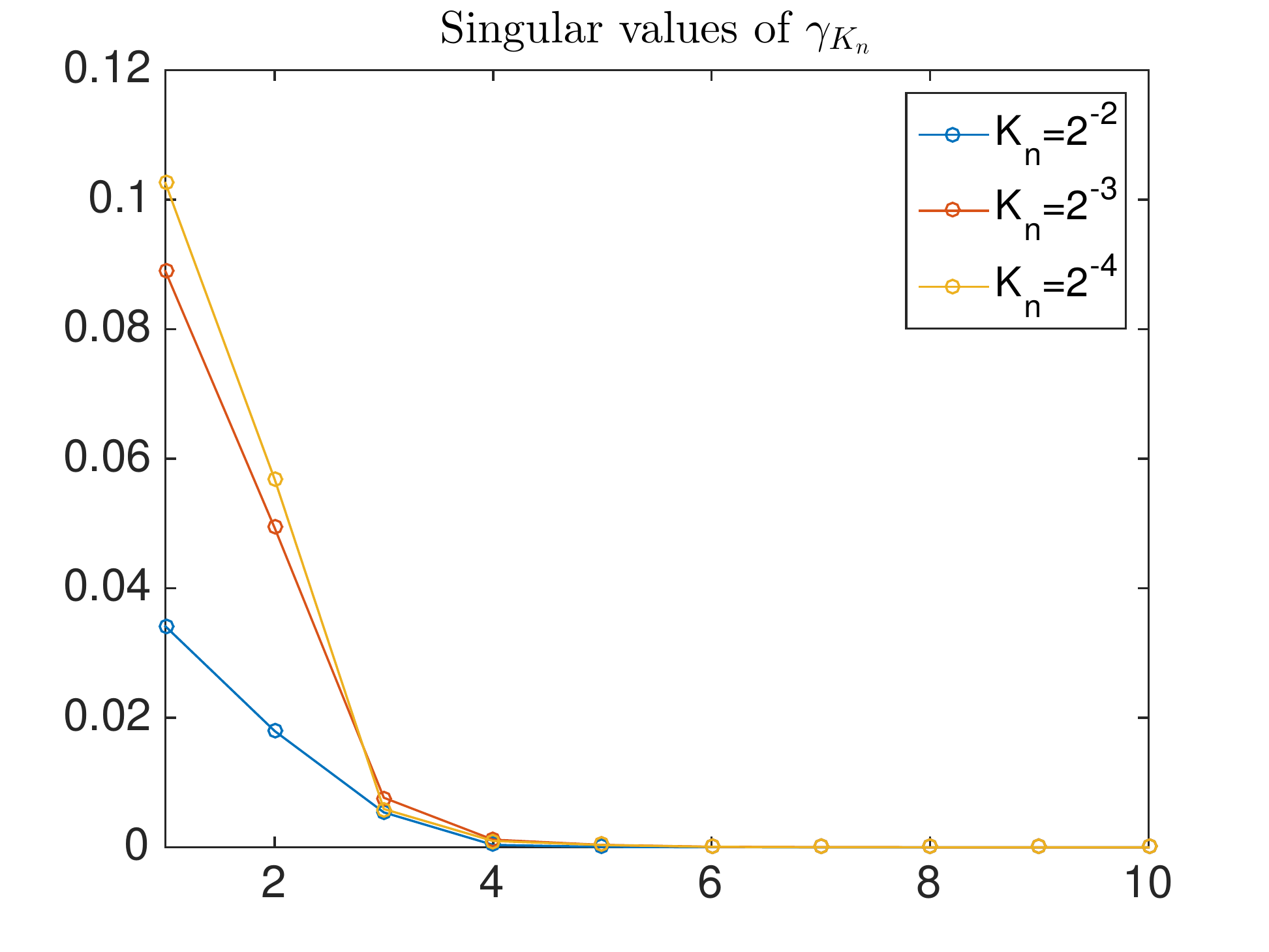}
  \caption{To recover $\sigma_s$, we plot the first few singular values of $\Amat$ in the interior with $\Kn=2^{-2},2^{-3}$ and $2^{-4}$. As $\Kn\to 0$, the problem becomes more and more singular. }
  \label{fig:sin_decay_int_sca}
\end{figure}

We also show that for small $\Kn$, $\gamma_\Kn \approx \Kn \nabla_x\rho_f\cdot\nabla_x\rho_g$, which lives in a space spanned by $(\partial_x G_1)^2$, $(\partial_x G_2)^2$ and $\partial_xG_1\partial_xG_2$ with $G_{1,2}$ standing for the Green's functions. The plots in~\ref{fig:vectors_int_sca} show the first three eigenvectors of $\Amat$ as $\Kn\to0$.
\begin{figure}[htp]
\centering
  \includegraphics[width=.3\linewidth]{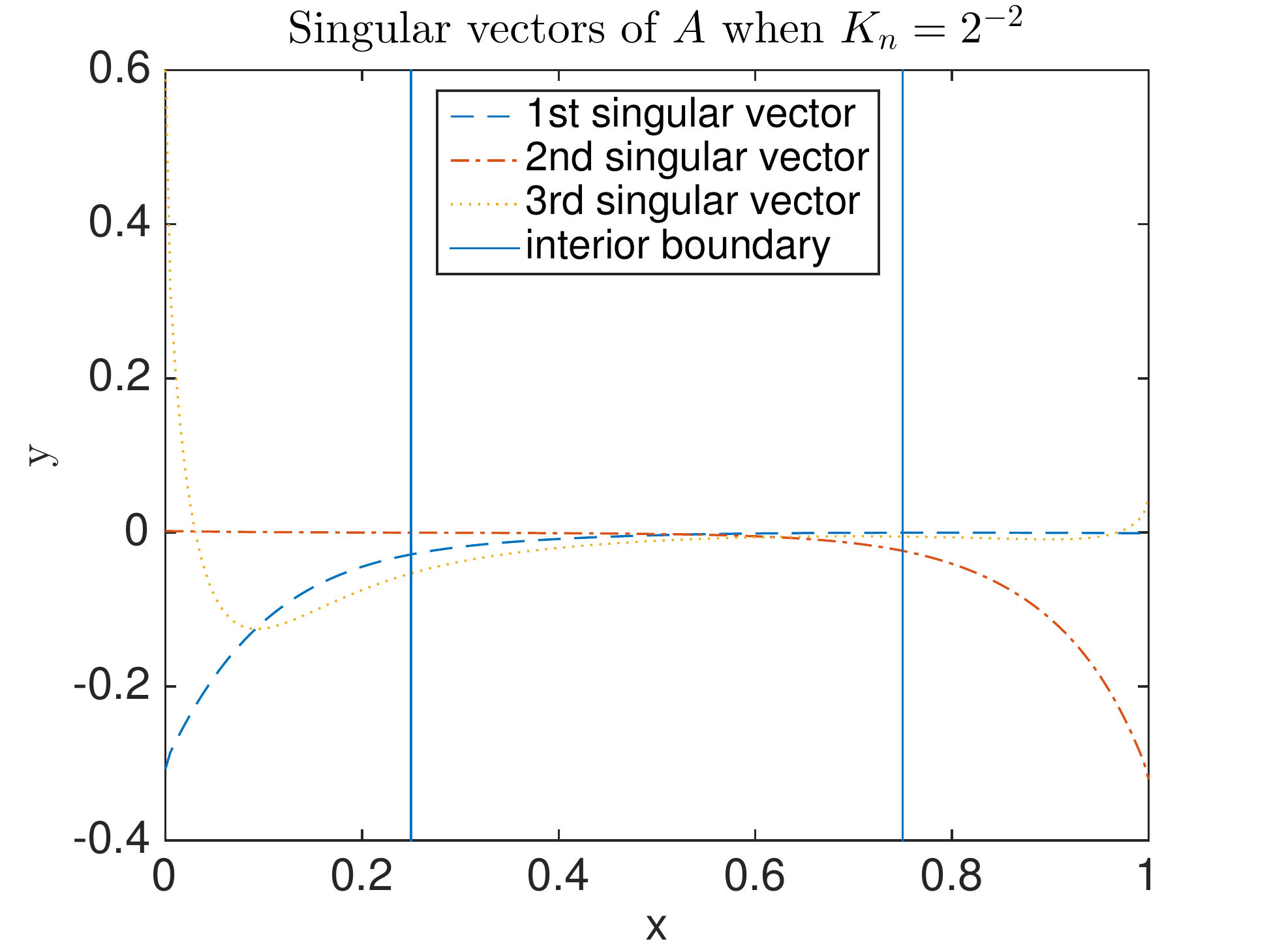}
  \includegraphics[width=.3\linewidth]{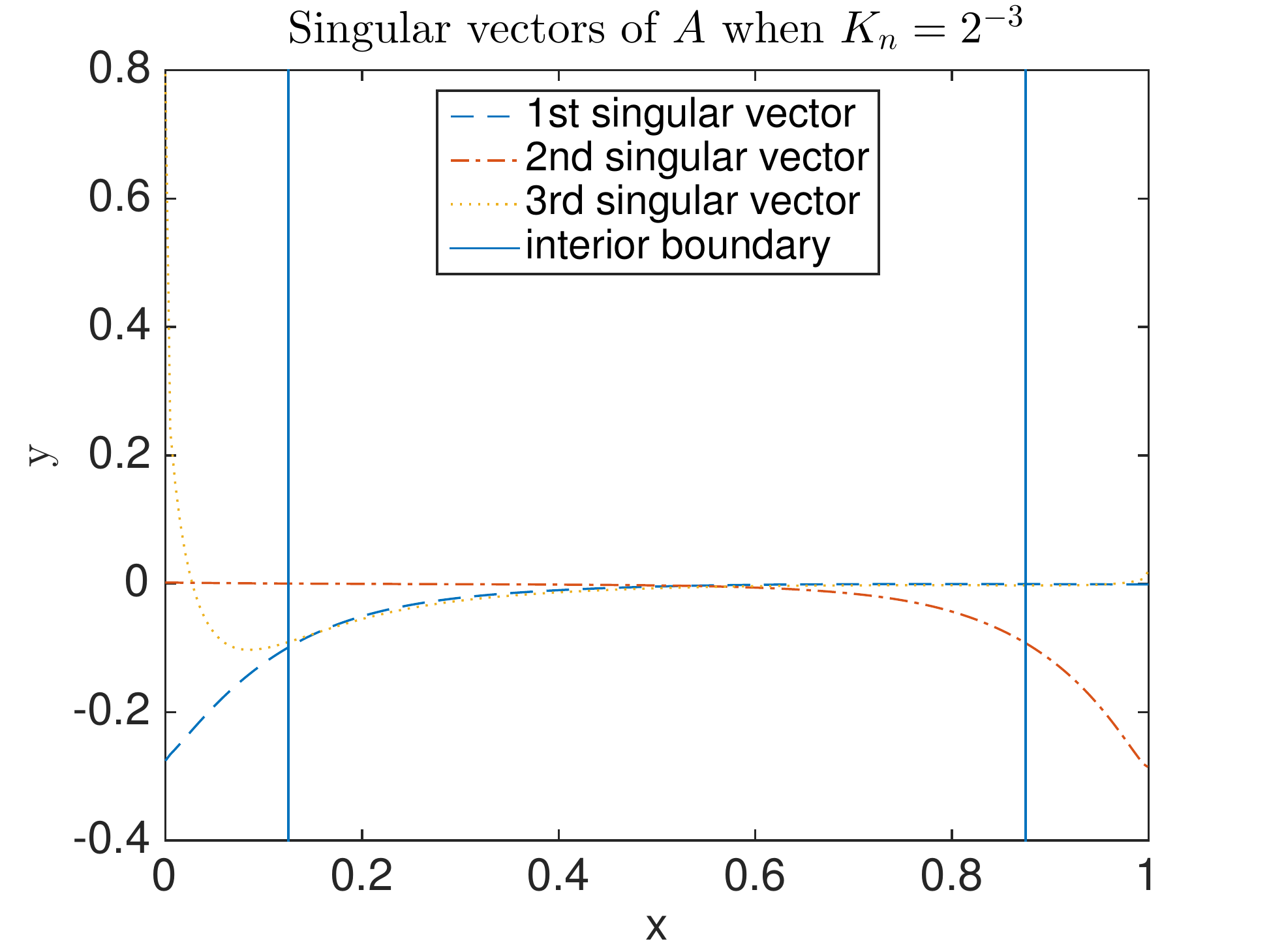}
  \includegraphics[width=.3\linewidth]{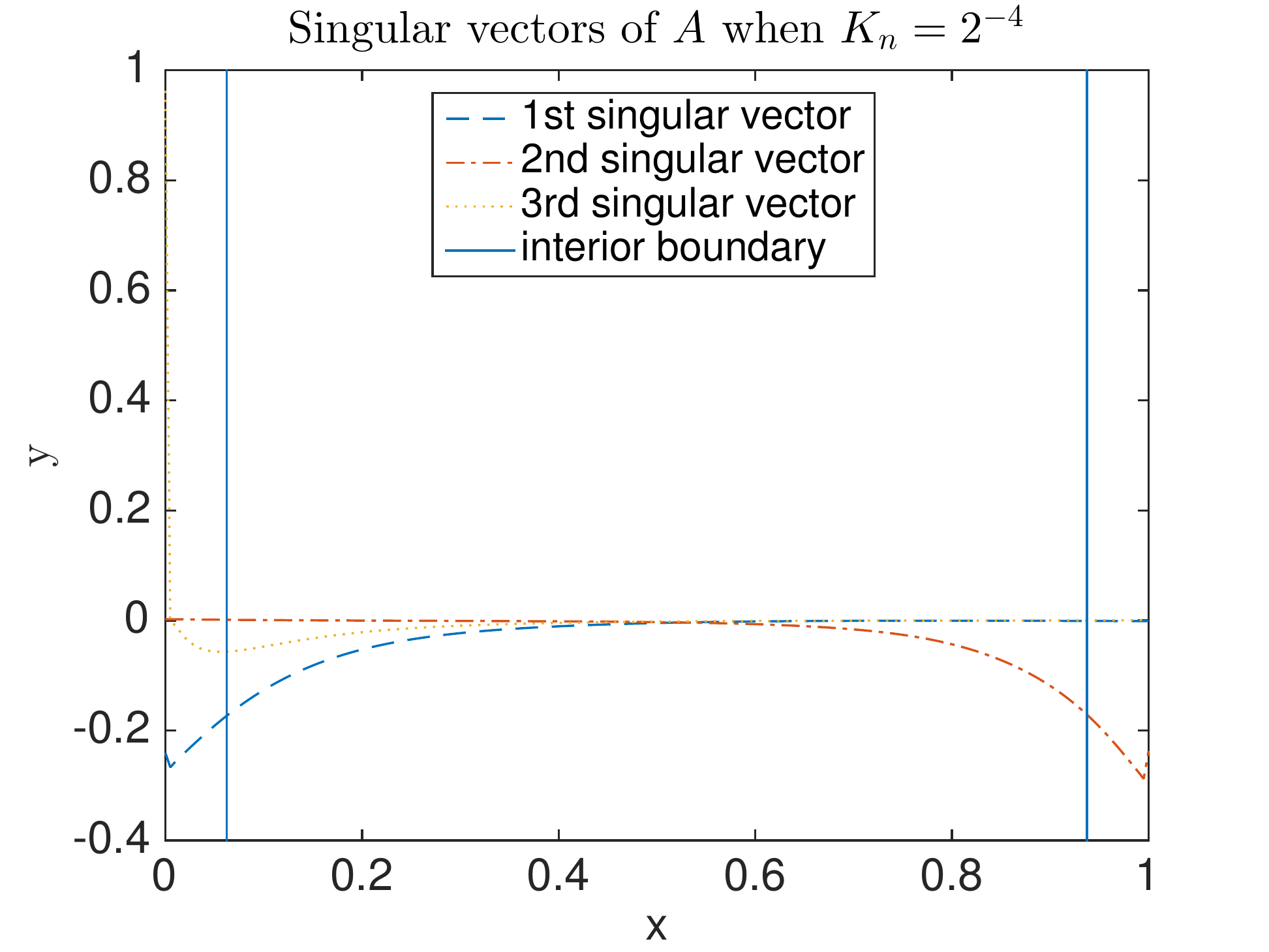}
  \caption{Plots of the singular vectors for $\Amat_I$ when $\Kn=2^{-2},2^{-3}$ and $2^{-4}$. It can be seen that the first two eigenvectors are almost symmetric to each other.}
  \label{fig:vectors_int_sca}
\end{figure}

\section{Appendix}
\hspace{-0.3cm}{\bf Appendix I: Rank of matrix $\Amat_I^0$}
Here we examine the rank of matrix $\Amat_I^0$. We show below that it has rank less than 4 in 1D, but not necessarily low rank in higher dimensions. Denote 
\begin{equation*}
\vec{\rho_f} = (\rho_f(x_1), \rho_f(x_2), \cdots \rho_f(x_{N_x})), \quad 
\vec{\rho_g} = (\rho_g(x_1), \rho_g(x_2), \cdots, \rho_g(x_{N_x}))\,
\end{equation*}
as the solutions to the discrete form of equations \eqref{eqn:rho-f-0} \eqref{eqn:g-0}:
\begin{equation*}
\vec{\rho_f} = \Bmat^{-1} \vec{\xi}, \quad \vec{\rho_g} = \Bmat^{-1} \vec{\delta_y}\,,
\end{equation*}
where $\Bmat_{Nx\times Nx}$ is the discrete version of operator $C\nabla_x \cdot \nabla_x - \sigma_{a0}$. Write $\Bmat^{-1} = (b_{ij})_{N_x \times N_x}$, and recall that the entries of $\Amat_I^0$ is \eqref{eqn:Aij}, we have
\begin{equation*}
\Amat_I^0 = \left(  \begin{array}{cccc}
\left( \sum_{j=1}^{N_x} b_{1j} \xi_j^{(1)} \right) \left( \sum_{j=1}^{N_x} b_{1j} \vec{\delta_{y_j}^{(1)}}\right) w_1
& \cdots
& \cdots
& \left( \sum_{j=1}^{N_x} b_{N_x j} \xi_j^{(1)} \right) \left( \sum_{j=1}^{N_x} b_{N_x j} \vec{\delta_{y_j}^{(1)}} \right) w_{N_x}
\\
\left( \sum_{j=1}^{N_x} b_{1j} \xi_j^{(2)} \right) \left( \sum_{j=1}^{N_x} b_{1j} \vec{\delta_{y_j}^{(2)}}\right) w_1
& \cdots
& \cdots
& \left( \sum_{j=1}^{N_x} b_{N_x j} \xi_j^{(2)} \right) \left( \sum_{j=1}^{N_x} b_{N_x j} \vec{\delta_{y_j}^{(2)}}\right) w_{N_x}
\\ \cdots & \cdots & \cdots & \cdots
\\ \left( \sum_{j=1}^{N_x} b_{1j} \xi_j^{(N_p)} \right) \left( \sum_{j=1}^{N_x} b_{1j} \vec{\delta_{y_j}^{(N_p)}}\right) w_1
& \cdots
& \cdots
& \left( \sum_{j=1}^{N_x} b_{N_x j} \xi_j^{(N_p)} \right) \left( \sum_{j=1}^{N_x} b_{N_x j} \vec{\delta_{y_j}^{(N_p)}}\right) w_{N_x}
\end{array}
\right)\,,
\end{equation*}
Here without abuse of notation, we still the denote number of interior points $N_x$. 

Then in 1D, since we only have two boundary points, $N_y = 2$, and $\xi_j^{(l)} = 0$ for all $j \neq 1$ or $N_x$, and $l= 1, 2, \cdots, N_\phi$. Therefore, $\Amat_I^0$ can be simplified to 
\begin{eqnarray*}
\Amat_I^0  &= & 
\begin{pmatrix} 
\xi_1^{(1)} &    &  &   \\
  & \ddots &   &   & \\
 &  &  \xi_1^{(N_\phi)}  & &  \\
 &  &   &  0  & & \\
  &  &   &    &  \ddots& \\
    &  &   &    &  & 0 \\
\end{pmatrix}
 \begin{pmatrix}
b_{11}^2   &   b_{21}^2    &   \cdots  & \cdots  & b_{N_x1}^2 \\
 \vdots      &   \vdots        &    \vdots  &  \vdots  &   \vdots  \\
b_{11}^2  &     b_{21}^2  &     \cdots  & \cdots  &   b_{N_x1}^2 \\
&  &  &  & \\
 &  &  &  & \\
 &  &0  &  & \\
 &  &    &  &
\end{pmatrix} 
\begin{pmatrix}
w_1 &   &   \\
  & \ddots &  \\
&  & w_{N_x} \\
\end{pmatrix}  
\\
&+ &
\begin{pmatrix} 
\xi_{N_x}^{(1)} &    &  &   \\
  & \ddots &   &   & \\
 &  &  \xi_{N_x}^{(N_\phi)}  & &  \\
 &  &   &  \xi_1^{(N_\phi+1)}  & & \\
  &  &   &    &  \ddots& \\
    &  &   &    &  & \xi_1^{(2N_\phi)} \\
\end{pmatrix}
 \begin{pmatrix}
b_{11} b_{1N_x}   &   b_{21}b_{2N_x}    &   \cdots  & \cdots  & b_{N_x1}b_{N_xN_x} \\
 \vdots      &   \vdots        &    \vdots  &  \vdots  &   \vdots  \\
b_{11} b_{1N_x}  &      b_{21}b_{2N_x}    &   \cdots  & \cdots  & b_{N_x1}b_{N_xN_x}\\
b_{11} b_{1N_x}   &   b_{21}b_{2N_x}    &   \cdots  & \cdots  & b_{N_x1}b_{N_xN_x} \\
 \vdots      &   \vdots        &    \vdots  &  \vdots  &   \vdots  \\
b_{11} b_{1N_x}  &      b_{21}b_{2N_x}    &   \cdots  & \cdots  & b_{N_x1}b_{N_xN_x}
\end{pmatrix} 
\begin{pmatrix}
w_1 &   &   \\
  & \ddots &  \\
&  & w_{N_x} \\
\end{pmatrix} 
\\
&+&
\begin{pmatrix} 
0 &    &  &   \\
  & \ddots &   &   & \\
 &  &  0  & &  \\
 &  &   &  \xi_1^{(N_\phi+1)}  & & \\
  &  &   &    &  \ddots& \\
    &  &   &    &  & \xi_1^{(2N_\phi)} \\
\end{pmatrix}
 \begin{pmatrix}
&  &  &  & \\
 &  &  &  & \\
 &  &0  &  & \\
 &  &    &  & \\
 b_{11}^2   &   b_{21}^2    &   \cdots  & \cdots  & b_{N_x1}^2 \\
 \vdots      &   \vdots        &    \vdots  &  \vdots  &   \vdots  \\
b_{11}^2  &     b_{21}^2  &     \cdots  & \cdots  &   b_{N_x1}^2 \\
\end{pmatrix} 
\begin{pmatrix}
w_1 &   &   \\
  & \ddots &  \\
&  & w_{N_x} \\
\end{pmatrix}   \,.
\end{eqnarray*}
Then it is easy to see that each component in the summation is rank one, therefore in total the rank of $\Amat_I^0$ does not exceed $3$.

In higher dimensions, on the contrary, does not guarantee a low rank property of $\Amat_I^0$. To see this, let $N_J = \{j_0, j_1, \cdots, j_J\} \subset \{ 1, 2, \cdots, N_x \}$ be the collection of indices such that $\xi_j^{l} \neq 0$, $j \in N_J$, $\forall l $, then $J< N_x$. Without loss of generality, we let $\vec{\delta_{y}^{(1)}}= \cdots =  \vec{\delta_{y}^{(N_\phi)}} = \vec{\delta_{y_{j_1}}}$, $\vec{\delta_{y}^{(N_\phi+1)}} = \cdots =  \vec{\delta_{y_{j_2}}}$, $\cdots$, $\vec{\delta_{y}}^{(Ny-1)N_\phi+1} = \cdots = \vec{\delta_{y_{j_{N_y}}}}$, then
\begin{equation*}
\Amat_I^0 =    \Amat_1 * \left( \begin{array}{ccc} b_{1j_1} & & \\ & \cdots & \\ & & b_{N_x j_1}  \end{array}\right) + 
\Amat_2 * \left( \begin{array}{ccc} b_{1j_2} & & \\ & \cdots & \\ & & b_{N_x j_2}  \end{array}\right) + 
\cdots + 
\Amat_{N_y} * \left( \begin{array}{ccc} b_{1j_{N_y}} & & \\ & \cdots & \\ & & b_{N_x j_{N_y}}  \end{array}\right)
\,,
\end{equation*}
where
\begin{equation*}
\Amat_1 = \left( \begin{array}{ccc}
& \AAmat & 
\\ & &
\\ & & 
\\ & \vec{0} & 
\\ & & 
\\ & & 
\end{array}
\right)_{N_p \times N_x}, \qquad
\Amat_2 = \left( \begin{array}{ccc}
& 0 & 
\\ & \AAmat & 
\\ & & 
\\ & \vec{0} & 
\\ & & 
\\ & & 
\end{array}
\right)_{N_p \times N_x}, \qquad  \cdots
\Amat_{N_y} = \left( \begin{array}{ccc}
&  & 
\\ && 
\\ &\vec{0} & 
\\ &  & 
\\ & & 
\\ & \AAmat & 
\end{array}
\right)_{N_p \times N_x}
\end{equation*}
and
\begin{equation*}
\AAmat = \left( \begin{array}{cccc} 
\sum_{b_{1j}} \xi_{j}^{(1)}  & \sum_{b_{2j}} \xi_{j}^{(1)} & \cdots & \sum_{b_{N_xj}} \xi_{j}^{(1)}\\
\vdots & \vdots & \vdots & \vdots
\\ 
\sum_{b_{1j}} \xi_{j}^{(N_\phi)}  & \sum_{b_{2j}} \xi_{j}^{(N_\phi)} & \cdots & \sum_{b_{N_xj}} \xi_{j}^{(N_\phi)} 
\end{array} \right)_{N_\phi \times N_x}\,.
\end{equation*}
Therefore, although $\text{rank}(\AAmat) \leq N_J$, adding them together may still give a full rank matrix $\Amat_I^0$.

\bibliographystyle{siam}
\bibliography{inverse_RTE_ref}

\end{document}